\documentclass[final,1p,times]{elsarticle}
\usepackage{amssymb}
\usepackage{amsmath}
 \usepackage{amsthm}
\usepackage{cleveref}
\usepackage{algorithm}
\usepackage{algorithmic}
\usepackage{color}
\newcommand{\R}{\mathbb{R}}
\newtheorem{thm}{Theorem}

 \newtheorem{prop}[thm]{Proposition}

\journal{Journal of theoretical biology}

\begin{document}

\begin{frontmatter}

%% Title, authors and addresses

%% use the tnoteref command within \title for footnotes;
%% use the tnotetext command for theassociated footnote;
%% use the fnref command within \author or \affiliation for footnotes;
%% use the fntext command for theassociated footnote;
%% use the corref command within \author for corresponding author footnotes;
%% use the cortext command for theassociated footnote;
%% use the ead command for the email address,
%% and the form \ead[url] for the home page:
%% \title{Title\tnoteref{label1}}
%% \tnotetext[label1]{}
%% \author{Name\corref{cor1}\fnref{label2}}
%% \ead{email address}
%% \ead[url]{home page}
%% \fntext[label2]{}
%% \cortext[cor1]{}
%% \affiliation{organization={},
%%             addressline={},
%%             city={},
%%             postcode={},
%%             state={},
%%             country={}}
%% \fntext[label3]{}

\title{On a Keller-Segel type equation to model Brain Microvascular Endothelial Cells growth's patterns}

%% use optional labels to link authors explicitly to addresses:
\author[label1,label2]{B. Ambrosio}
 \affiliation[label1]{organization={ University Le Havre Normandie, Normandie Univ.,
LMAH UR 3821},city={Le Havre},
             postcode={76600},
             country={France}}
 \affiliation[label2]{organization={The Hudson School of Mathematics},
             addressline={244 Fifth Avenue, Suite Q224 },
             city={ New York},
             postcode={10001},
             state={NY},
             country={USA}}
             
\author[label3]{A.F. Garroudji}
 \author[label4,label5]{S. Fitzsimons}
\author[label3]{H. Zaag}
\author[label5]{F.M. Elahi} %% Author name

\affiliation[label3]{organization={University Sorbonne Paris Nord, LAGA, CNRS (UMR 7539)},
             city={Villetaneuse},
             postcode={93430},
             country={France}
             }

              \affiliation[label4]{organization={School of Medicine, Conway Institute, University College of Dublin}, 
               city={Belfield, Dublin},
             postcode={4},
             country={Ireland}}
 \affiliation[label5]{organization={Departments of Neurology and Neuroscience, Ronald M. Loeb Center for Alzheimer’s Disease, Friedman Brain Institute, Glickenhaus Center for Successful Aging, Icahn School of Medicine at Mount Sinai},
             addressline={1 
Gustave L. Levy Place},
             city={New York},
             postcode={10029-5674},
             state={NY},
             country={USA}}

%% Author affiliation

%% Abstract
\begin{abstract}
%% Text of abstract
This article presents a partial differential equation (PDE) of Keller–Segel (KS) type that reproduces patterns commonly observed during the growth of brain microvasculature. We provide mathematical insights into the mechanisms underlying the emergence of these patterns. In addition, we derive a data-driven equation that ensures a consistent temporal evolution of the chemoattractant associated with the observed microvascular dynamics. Beyond numerical simulations, the aim of this study is to advance a comprehensive mathematical modeling framework, spanning blood flow in cerebral arterial networks to biochemical processes, in order to better understand how vascular impairments may contribute to neurodegenerative diseases.
\end{abstract}

%%Graphical abstract
%\begin{graphicalabstract}
%\includegraphics{grabs}
%\end{graphicalabstract}

%%Research highlights
%\begin{highlights}
%\item Research highlight 1
%\item Research highlight 2
%\end{highlights}

%% Keywords
\begin{keyword}
%% keywords here, in the form: keyword \sep keyword
Brain Microvasculature, Mathematical Modeling, Mathematical Analysis, Keller-Segel, Computational Neuroscience
%% PACS codes here, in the form: \PACS code \sep code

%% MSC codes here, in the form: \MSC code \sep code
%% or \MSC[2008] code \sep code (2000 is the default)

\end{keyword}

\end{frontmatter}

%% Add \usepackage{lineno} before \begin{document} and uncomment 
%% following line to enable line numbers
%% \linenumbers

%% main text
%%

%% Use \section commands to start a section
\section{Introduction}
Neurodegenerative diseases (ND) constitute a major and growing global health burden. It is estimated that more than 3 billion people currently live with neurological conditions \cite{Ste2024}, while dementia alone is projected to affect approximately 152 million individuals by 2050 \cite{Nich2022}. Increasing evidence indicates that dysregulation of cerebral blood flow plays an important role in the development and progression of several forms of dementia \cite{Win2024,Owe2024,Tor2024,Zhu2022}. Consequently, understanding the structure and dynamics of the brain microvasculature in both healthy and pathological conditions is of central importance.

Recent advances in stem-cell technologies provide powerful tools to investigate the mechanisms of brain microvascular growth. In particular, induced pluripotent stem cells (iPSCs) can be differentiated into brain microvascular endothelial cells (BMECs). BMECs can also be directly isolated from the human brain, enabling the construction of multicellular \textit{in vitro} models of the neuro–glio–vascular unit. These models offer a valuable framework for identifying the molecular drivers of pathological cell–cell interactions within this system \cite{Elahi2023}.

The formation of vascular networks \textit{ in vitro} has been observed and studied for several decades (see, for example, \cite{Good2007}). More recently, the increasing availability of large-scale biological datasets and the development of computational analysis tools have created new opportunities to identify molecular markers associated with vascular pattern formation and to investigate correlations between spatial cellular patterns and omics data in the brain.

From the perspective of applied mathematics, pattern formation has long been studied through the framework of reaction–diffusion (RD) systems. These models are known to generate a wide variety of spatial and spatio-temporal structures through mechanisms such as Turing instabilities, Hopf bifurcations, phase-locked oscillations, and chaotic dynamics; see, for example, \cite{Murray,Turing,Amb-2012,Amb-2016}. In the context of cellular aggregation and pattern formation, a classical model is the Keller–Segel (KS) system \cite{Kel1971}, which incorporates both diffusive transport and chemotactic cell migration.

In this work, we study a Keller–Segel-type model capable of reproducing spatial patterns similar to those observed in BMEC cultures \textit{ in vitro}. We first present the model and provide a preliminary analysis of the associated ordinary differential equation (ODE). Numerical simulations are then used to illustrate how variations in key parameters lead to the emergence of network-like spatial structures resembling experimentally observed BMEC patterns. We further propose a data-driven extension of the model that ensures a consistent temporal evolution of the chemo-attractant associated with the observed BMEC dynamics. Finally, we analyze the dynamical behavior of the model in greater detail and introduce a reduced four-dimensional system of $2\times2$ coupled ODEs that captures essential features of the underlying dynamics.

The experimental data used for the mathematical modeling performed here were generated by confocal microscopy of microvasculature composed of  BMECs and stem cell derived mural cells, in the Elahi Lab at the Icahn School of Medicine at Mount Sinai. BMECs were isolated from human brain (obtained from ScienCell Research Laboratories, Carlsbad, CA, USA; Cat. No. 1000). Mural Cells were differentiated from iPSC lines according to well-established protocols \cite{Blanchard2020APOE4,Lippmann2012BBB,Stanton2025miBrain}. Cells were seeded in 96-well plates coated with Geltrex and cultured in Endothelial Cell Medium supplemented with vascular endothelial growth factor (VEGF, 5 ng /mL) and fibroblast growth factor 2 (FGF2, 2 ng/mL).

\section{On a modified Keller-Segel Equation}
\subsection{Keller-Segel Equations}
Relying on observations of \textit{Escherichia coli} placed in an environment containing oxygen and an energy source, Keller and Segel \cite{Kel1971} proposed a partial differential equation (PDE) to model the formation of traveling bands in bacterial concentrations observed on plates. The central idea of their work was to interpret this phenomenon as a consequence of chemotaxis: bacteria tend to avoid regions of low concentration and move preferentially toward higher concentrations of the substrate. Additional effects incorporated into the model include random motion (diffusion) and substrate consumption, while growth was not taken into account.

The original Keller--Segel \cite{Kel1971} equation is given by
\begin{equation}
\label{eq:KS}
    \left\{ \begin{array}{rl}
         \frac{\partial b}{ \partial t}&=\frac{\partial }{ \partial x}\big(\mu(s) \frac{\partial b}{ \partial x} \big) -\frac{\partial }{ \partial x}\big(b\xi(s) \frac{\partial s}{ \partial x} \big)  \\
           \frac{\partial s}{ \partial t}&=-k(s)b+D \frac{\partial s}{ \partial x^2}  \\
    \end{array}
    \right.
\end{equation}

These notations follow those of the original article. Here, $b$ denotes the bacterial concentration and $s$ the substrate concentration. The first term on the right-hand side of the first equation represents bacterial motion in the absence of chemotaxis. The second term on the right-hand side of the first equation describes the chemotactic response of the bacteria. It is assumed that the portion of the bacterial flux resulting from chemotaxis is proportional to the chemical gradient. This assumption is analogous to those used in the derivation of the heat equation. 

Since then, numerous variants of the model have been studied both numerically and theoretically. For example, a growth term was introduced and discussed in \cite{Mim1996}. We refer to \cite{Mim1996,Dol2024,Cor2004,Kut2012} and the references therein for relevant mathematical and numerical studies. The goal of this paper is to propose and analyze a KS-type equation that reproduces relevant patterns for BMEC networks, and to further highlight research directions that appear important for improving our understanding of ND.
\subsection{On a modified Keller-Segel Equation}
We consider the following modified KS type equation,
\begin{equation}
\left\{
    \begin{array}{rl}
         u_t&=f(u)-b\nabla \cdot (u\nabla v)+d_u\Delta u  \\
         v_t&=cu-ev+d_v\Delta v 
    \end{array}
    \right.
    \label{eq:mKS}
\end{equation}
on a bounded domain $\Omega$ with Neuman Boundary Conditions (NBC), and with 
\[f(u)=au(1-u)(u-\gamma).\]

The notations are as follows: the  term $\nabla \cdot $ stands for the divergence operator, the  term $\nabla$ for the gradient and the term $\Delta$ for the Laplacian. The variable $u$ represents the concentration of endothelial cells and $v$ represents the substrate and the biochemical mechanisms through which cells communicate. This equation is of Keller-Segel type with a modified reaction function. Let us briefly recall the fundamental principles underlying \eqref{eq:mKS}. The first equation stipulates that the concentration of cells in an infinitesimal volume will evolve as a result of an intrinsic production function $f$, the divergence of the concentration of $-bu$ times the gradient of $v$ (which means that if there is more $v$ outside, the concentration at the current point will decrease; this is explained by the divergence theorem), and a diffusion in $u$, representing the fact that the concentration evolves like the average of its surroundings. As for $v$, it is produced from $u$ with a rate $c$ and it naturally disappears at a rate $e$. We introduce the modified function $f$ so that the underlying ODE $u'=f(u)$ would have three stationary states: two stable $0$ and $1$, and one unstable $\gamma$. We think that the function $f$ is relevant to model patterns formed by BMEC, because after some time it is observed in experiments that one can roughly divide the two-dimensional well in regions where there is BMEC ($u\simeq 1$) and regions where there is no BMEC ($u\simeq 0$). Investigating the dynamical behavior with variation of the parameter $\gamma$ leads to various configurations, including those observed in experiments. The numerical simulations are presented in \cref{sec:NSAP}. We note that \Cref{eq:mKS} equivalently writes
\begin{equation}
\left\{
    \begin{array}{rl}
         u_t&=f(u)-b\nabla u \cdot \nabla v-bu\Delta v+d_u\Delta u  \\
         v_t&=cu-ev+d_v\Delta v 
    \end{array}
    \right.
    \label{eq:mKSbis}
\end{equation}
This last formulation will be used in section \ref{sec:NS3timephase} for a qualitative description.
\subsection{Basic Mathematical Analysis of \cref{eq:mKS}}
\subsubsection{The underlying ODE}
We consider,
\begin{equation}
\left\{
    \begin{array}{rl}
         u_t&=f(u)  \\
         v_t&=cu-ev
    \end{array}
    \right.
    \label{eq:mKSODE}
\end{equation}
We have the following result, which follows from straightforward computations,
\begin{prop}
    \Cref{eq:mKSODE} admits three stationary points $U^*_0=(0,0)$,$U^*_1=(\gamma,\frac{c}{e}\gamma)$, and $U^*_2=(1,\frac{c}{e})$. $U^*_0$ and $U^*_2$ are stable nodes, $U^*_1$ is a saddle-node. The stable manifold of $U^*_1$ is given by $u=\gamma$. If the initial condition $u_0$ satisfies $0\leq u_0<\gamma$ then the trajectory converges toward $U^*_0$, if   $u_0>\gamma$ the trajectory converges toward $U^*_2$.
\end{prop}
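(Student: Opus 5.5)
The system \eqref{eq:mKSODE} is triangular: the $u$-equation does not involve $v$, and the $v$-equation is linear in $v$ with forcing $cu$. I will exploit this structure throughout, assuming $a>0$, $c>0$, $e>0$ and $0<\gamma<1$.

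\textbf{Equilibria.} Setting $f(u)=0$ gives $u\in\{0,\gamma,1\}$. Then $cu-ev=0$ gives $v=\frac{c}{e}u$, which produces exactly $U^*_0$, $U^*_1$ and $U^*_2$.

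\textbf{Linearization.} The Jacobian at $(u^*,v^*)$ is lower triangular,
\[
J=\begin{pmatrix} f'(u^*) & 0\\ c & -e\end{pmatrix},
\]
so its eigenvalues are $f'(u^*)$ and $-e$. Writing $f(u)=-a\,u(u-1)(u-\gamma)$, I compute
\[
f'(0)=-a\gamma<0,\qquad f'(1)=-a(1-\gamma)<0,\qquad f'(\gamma)=a\gamma(1-\gamma)>0.
\]
At $U^*_0$ and $U^*_2$ both eigenvalues are real and negative, so these points are stable nodes.

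At $U^*_1$ the eigenvalues are real, of opposite signs and nonzero, so $U^*_1$ is hyperbolic with a one-dimensional stable and a one-dimensional unstable manifold. Strictly speaking this is a saddle; I will read the statement's ``saddle-node'' in that sense and note it.

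\textbf{Invariant manifolds of $U^*_1$.} The line $\{u=\gamma\}$ is invariant, since $u'=0$ there. On this line the flow is $v'=c\gamma-ev$, which converges to $\frac{c}{e}\gamma$. Hence the line lies in the stable set of $U^*_1$. It is tangent at $U^*_1$ to the eigenvector $(0,1)$ of the eigenvalue $-e$, so by uniqueness of the stable manifold of a hyperbolic point it is exactly that manifold.

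Conversely, if $u_0\neq\gamma$, then $u(t)\neq\gamma$ for all $t$ by uniqueness of solutions, and the next step shows $u(t)\to 0$ or $u(t)\to 1$. So no other point converges to $U^*_1$, and the stable manifold is globally $\{u=\gamma\}$.

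\textbf{Global convergence.} This is the only step requiring more than computation, and it reduces to scalar phase-line analysis.

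For $0\le u_0<\gamma$, the interval $[0,\gamma)$ is invariant and $f<0$ on $(0,\gamma)$. Hence $u(t)$ is nonincreasing and bounded below by $0$, so it converges to a zero of $f$ in $[0,\gamma)$, namely $0$.

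For $u_0>\gamma$, $u(t)$ is increasing if $u_0<1$, decreasing if $u_0>1$, and constant if $u_0=1$. It stays on the same side of $1$, exists globally, and converges to $1$.

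Finally, for $v$ I use the variation of constants formula
\[
v(t)=e^{-et}v_0+c\int_0^t e^{-e(t-s)}u(s)\,ds .
\]
Since $u(s)\to u^\infty$, a standard argument (split the integral at a large time $T$) gives $v(t)\to \frac{c}{e}u^\infty$. This yields convergence to $U^*_0$ or $U^*_2$, respectively.

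I expect no real obstacle. The one point needing care is this limit for a convolution with an asymptotically constant input, which is elementary.
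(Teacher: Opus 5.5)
Your proposal is correct and is essentially the ``straightforward computation'' the paper invokes without writing out: equilibria, the triangular Jacobian with eigenvalues $f'(u^*)$ and $-e$, a scalar phase-line argument for $u$, and variation of constants for $v$. Your remark that $U^*_1$ is a hyperbolic saddle rather than a saddle-node is also right. One small loose end: your claim that the stable set is globally $\{u=\gamma\}$ needs convergence for every $u_0\neq\gamma$, but you only treated $u_0\ge 0$. Add the identical phase-line case $u_0<0$, where $f>0$ and $u$ increases to $0$.
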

\Cref{fig:1} provides an illustration of the solutions of \Cref{eq:mKSODE}.
\begin{figure}
    \centering
    \includegraphics[height=5cm,width=5cm]{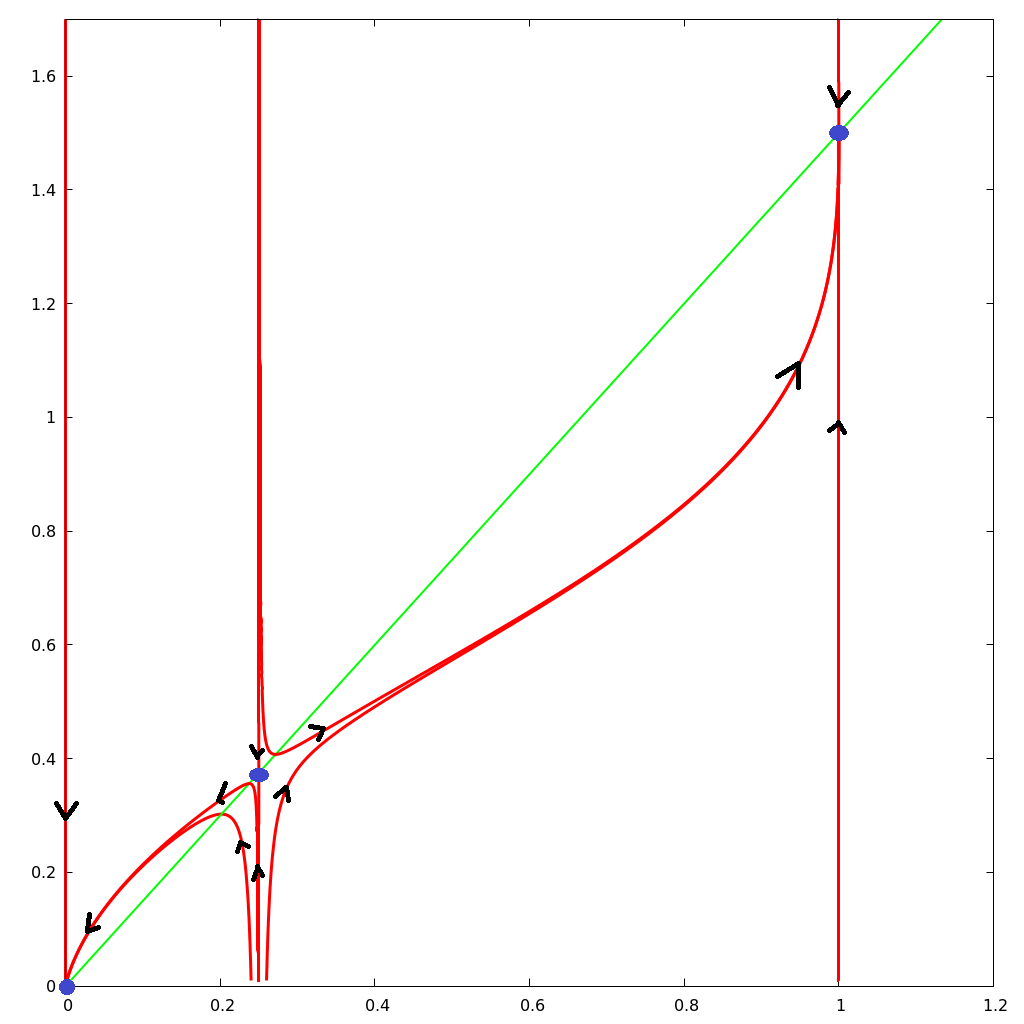}
    \caption{Solutions of  the ODE \cref{eq:mKSODE}}
    \label{fig:1}
\end{figure}
\subsubsection{Positivity, Mean value}
In this section, we provide two basic mathematical results. The first one deals with the positivity of solutions which must be verified in order to satisfy the biological meaning of the variables. The second one deals with the average values.
\begin{prop}
    The solutions of \Cref{eq:mKS} remain non-negative.
\end{prop}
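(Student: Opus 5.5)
We argue on a classical solution $(u,v)$ of \Cref{eq:mKS} on $\Omega\times[0,T)$ with Neumann boundary conditions and non-negative initial data $u_0,v_0\geq 0$, and we treat the two equations in turn. The first equation is handled by a linear comparison argument, and the second equation then follows from the sign of $u$.

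For $u$ we use the expanded form \eqref{eq:mKSbis} and view the equation as linear in $u$ with $v$ frozen:
\[
u_t-d_u\Delta u+b\nabla v\cdot\nabla u = u\,\big(a(1-u)(u-\gamma)-b\Delta v\big)=:c_1(x,t)\,u .
\]
On any compact time interval $[0,T']\subset[0,T)$ the coefficient $c_1$ is bounded, say $|c_1|\leq M$, since $u$ and $\Delta v$ are continuous up to the boundary for a classical solution. We set $w=e^{-(M+1)t}u$, which satisfies
\[
w_t-d_u\Delta w+b\nabla v\cdot\nabla w=(c_1-M-1)w,
\]
with a strictly negative zero-order coefficient. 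Suppose $w$ attains a negative minimum over $\overline\Omega\times[0,T']$ at some point $(x_0,t_0)$ with $t_0>0$. If $x_0$ is interior, then $w_t\le0$, $\nabla w=0$ and $\Delta w\ge0$ there, so the left-hand side is $\le 0$ while the right-hand side is $>0$, a contradiction. If $x_0\in\partial\Omega$, the Hopf lemma (or a perturbation $w+\varepsilon(1+t)$ combined with the Neumann condition $\partial_\nu w=0$) rules it out. Hence $\min w\geq\min(u_0,0)=0$, and therefore $u\geq0$. An equivalent route is to note that $\underline u\equiv0$ solves the same linear equation and to apply the comparison principle.

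For $v$, once $u\geq0$ is known, $v$ satisfies $v_t-d_v\Delta v+ev=cu\geq0$ with Neumann boundary conditions and $v_0\geq0$. The same minimum argument applied to $e^{et}v$, or positivity of the Neumann heat semigroup in the Duhamel formula
\[
v(t)=e^{-et}e^{td_v\Delta}v_0+c\int_0^te^{-e(t-s)}e^{(t-s)d_v\Delta}u(s)\,ds,
\]
gives $v\geq0$, provided $c\geq 0$.

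The main obstacle is regularity. The argument needs $\Delta v$ and $\nabla v$ to be bounded so that the coefficients of the linear equation for $u$ are bounded, and Keller--Segel type systems can blow up. We therefore state the result only on the maximal existence interval of a classical solution, where such bounds hold on compact subintervals by standard parabolic regularity. On that interval the argument above applies, and positivity extends up to the maximal time.
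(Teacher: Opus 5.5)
Your proof is correct and follows the same maximum-principle idea the paper only sketches: when $u$ or $v$ first reaches $0$ at an interior point, the gradient vanishes, the Laplacian is non-negative and the time derivative is non-positive. You supply the details the paper leaves to Protter--Weinberger and Smoller, namely the weight $e^{-(M+1)t}$ that makes the contradiction strict, the Hopf lemma at Neumann boundary points, treating $u$ before $v$, and working on the maximal existence interval. Two of your parenthetical alternatives should be dropped. The perturbation $w+\varepsilon(1+t)$ leaves $\partial_\nu w=0$ unchanged, so it cannot exclude a boundary minimum; only the Hopf lemma does that. Applied to $e^{et}v$, the minimum argument yields only $0\ge c e^{et}u\ge 0$, which is no contradiction, so use $e^{-t}v$, add $\varepsilon t$, or rely on your Duhamel formula.
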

\begin{proof}
    We do not provide here the details of the rigorous proof whose technicality is not in the scope of this article. We refer to \cite{BookProtter-Weinberger,BookSmoller} for textbooks with technical details.  The essential principle at play is that if $u$ or $v$ reaches $0$ at an interior point of the domain, then we are at a minimum,  the derivative is zero and the Laplacian is non-negative, but the time derivative is non-positive.
    
\end{proof}
The average concentration values are of practical interest since they can serve as a measure of degeneracy for diseased cells. They satisfy specific equations, as the following proposition states.
Let 
\[\bar{u}(t)=\int_{\Omega}u(x,t)dx,\, and \,\bar{v}(t)=\int_{\Omega}v(x,t)dx,  \]
then the following results hold. 
\begin{prop}
$\bar{u}$ and  $\bar{v}$ satisfy
    \begin{equation}
\left\{
    \begin{array}{rl}
         \bar{u}_t&=\int_\Omega f(u)dx  \\
         \bar{v}_t&=c\bar{u}-e\bar{v}
    \end{array}
    \right.
    \label{eq:barODE}
\end{equation}
which for stationary solutions of \Cref{eq:mKS} implies
 \begin{equation}
\left\{
    \begin{array}{rl}
         0&=\int_\Omega f(u)dx  \\
         0&=c\bar{u}-e\bar{v}
    \end{array}
    \right.
    \label{eq:StaBarODE}
\end{equation}
\end{prop}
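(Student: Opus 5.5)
The plan is to integrate each equation of \eqref{eq:mKS} over $\Omega$, differentiate $\bar u$ and $\bar v$ under the integral sign, and use the divergence theorem together with the Neumann boundary conditions to remove every transport and diffusion term. Throughout I assume the solution is classical, i.e.\ $C^1$ in $t$ and $C^2$ in $x$ up to the boundary, on the time interval where it exists. This regularity justifies the exchange $\frac{d}{dt}\int_\Omega u\,dx=\int_\Omega u_t\,dx$ (and the same for $v$), since $\Omega$ is bounded.

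\textbf{The $u$-equation.} Integrating the first equation of \eqref{eq:mKS} gives
\[
\bar u_t=\int_\Omega f(u)\,dx-b\int_\Omega \nabla\cdot(u\nabla v)\,dx+d_u\int_\Omega \nabla\cdot(\nabla u)\,dx .
\]
By the divergence theorem, with $n$ the outward normal, the last two integrals become the boundary integrals $\int_{\partial\Omega}u\,\partial_n v\,d\sigma$ and $\int_{\partial\Omega}\partial_n u\,d\sigma$. The Neumann conditions $\partial_n u=\partial_n v=0$ on $\partial\Omega$ make both vanish. It is important to work with the divergence form \eqref{eq:mKS} here and not the expanded form \eqref{eq:mKSbis}, so that the chemotactic term is a pure divergence. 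This yields the first line of \eqref{eq:barODE}.

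\textbf{The $v$-equation.} Integrating the second equation gives $\bar v_t=c\bar u-e\bar v+d_v\int_{\partial\Omega}\partial_n v\,d\sigma$. The boundary term vanishes by the same argument, and linearity of the integral gives the second line of \eqref{eq:barODE}.

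\textbf{Stationary solutions.} A stationary solution has $u_t=v_t=0$ pointwise, so $\bar u$ and $\bar v$ are constant in time. Substituting $\bar u_t=\bar v_t=0$ into \eqref{eq:barODE} gives \eqref{eq:StaBarODE} directly.

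\textbf{Main difficulty.} The only delicate point is the regularity needed for the divergence theorem and for differentiating under the integral sign. The paper treats this at the level of rigor used for positivity, so I would state the classical-solution assumption and cite \cite{BookSmoller} for local existence of smooth solutions. Proving global regularity, including the possibility of Keller--Segel blow-up, is beyond the scope of this statement: the identities hold on whatever time interval the solution exists.
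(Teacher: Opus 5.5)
Your argument is correct and is the computation the paper leaves implicit; the paper states this proposition without proof. You integrate each equation over $\Omega$, apply the divergence theorem, and use the Neumann conditions $\partial_n u=\partial_n v=0$ to remove the chemotactic flux $u\,\partial_n v$ and both diffusive fluxes, then set $\bar u_t=\bar v_t=0$ for stationary solutions. One small overstatement: the divergence form is a convenience rather than a necessity, since in the expanded form $\int_\Omega \nabla u\cdot\nabla v\,dx+\int_\Omega u\,\Delta v\,dx=\int_{\partial\Omega}u\,\partial_n v\,d\sigma$ vanishes in the same way.
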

\section{Numerical Simulations and Emergence of angiogenic patterns}
\label{sec:NSAP} In this section, we describe how numerical simulations of \eqref{eq:mKS} can lead to angiogenic patterns and how variation of the parameter $\gamma$ affects those patterns. For numerical simulations, we consider a space-discretized version of \Cref{eq:mKS} which corresponds to a finite difference scheme in space (with space step $h=1$) and a Runge-Kutta resolution method in time.
\subsection{Parameter setting and Emergence of Angiogenic patterns}
 The value of parameters are as follows:
\begin{equation}
         a=7, \, b=10,\, d_u=1,\,   c=3, \, e=2,\, d_v=10.
    \label{eq:param-mKS}
\end{equation}
The parameter $\gamma$ is to be varied between $0$ and $1$. 
The model was initially inspired by \cite{Mim1996,Kut2012} and subsequently adjusted to capture angiogenic patterns. We choose the initial conditions as follows. The initial concentration of $u$ is distributed over the square domain: it is set to $0.8$ in nine small squares that are homogeneously distributed across the spatial domain, and to $0.2$ elsewhere. Additionally, a uniformly distributed random perturbation in the range $[0, 0.2]$ is added at each point. For $v$, the initial concentration is set to the constant value $0.5$.

The underlying idea is that the concentrations are driven toward $0$ or $1$ by the reaction function $f$, in combination with diffusion and transport terms. Choosing $\gamma$ in the range $0.25$–$0.29$ leads to the formation of angiogenic patterns.

\Cref{fig:MicroAndKS} illustrates a microscope image of BMEC, along with a corresponding numerical result obtained from \Cref{eq:mKS}. The left panel shows the image of the well containing BMEC after 8 hours. The center panel displays the same image after filtering using the AngioTool software \cite{AngiotoolWS, Zud2011}. The right panel presents the solution of \Cref{eq:mKS} with $\gamma = 0.29$ at a fixed time. More details about the simulations are provided in the next paragraph.

\begin{figure}
    \centering
    \includegraphics[height=3cm,width=4cm]{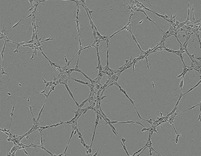}
    \includegraphics[height=3cm,width=4cm]{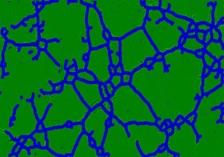}
    \includegraphics[height=3cm,width=4cm]{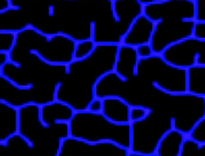}
    \caption{Left: image of BMEC provided by microscopy after 8 hours of evolution.  Center: transformed image provided by the free software angiotool. Right: solution of the discretized modified KS model \eqref{eq:mKS} at a fixed time.}
    \label{fig:MicroAndKS}
\end{figure}
\subsection{Variation of the parameter $\gamma$}
We proceed with the variation of the parameter $\gamma$ and discuss the variety of observed patterns. If one had to consider only the underlying ODE \cref{eq:mKSODE}, the principle would be as follows: the smaller the value of $\gamma$, the more blue, (i.e., closer to the constant steady state $(u,v) = (1, \frac{c}{e})$) the final pattern would be. This is because decreasing $\gamma$ enlarges the basin of attraction of the steady state $u = 1$ for initial conditions in $(0,1)$. However, this intuition does not always hold in the full model. We begin with the value $\gamma = 0.1$, illustrated in \Cref{fig:KSgam01}. For this small value of $\gamma$, the solution asymptotically converges to the spatially homogeneous steady state with $u = 1$ and $v = \frac{c}{e}$. Next, we consider $\gamma = 0.2$, corresponding to \Cref{fig:KSgam02}. The dynamics are very similar to the previous case, at least initially. Interestingly, a small black region (i.e. close to the steady state $(u,v) = (0,0)$) persists in the lower right corner of the domain, leading to a traveling wave solution. Asymptotically, the domain becomes almost entirely black, except for thin regions near the boundary. Simulations for $\gamma=0.25$ are shown in \Cref{fig:KSgam025}. In this case, the solution initially evolves rapidly toward a state dominated by blue, with small regions of black. Subsequently, the black regions propagate, leading to structures similar to those observed in experiments. The patterns then evolve more slowly, with small polygons shrinking and eventually disappearing after transitioning through a blue phase. The final state is a network that suggests a stable, minimal-energy configuration, corresponding to a spatially non-homogeneous steady-state solution. This description indicates a pronounced temporal structure with distinct dynamical phases. We will discuss this point in more detail in section \cref{sec:NS3timephase} by examining the time evolution at specific spatial positions. Finally, for $\gamma = 0.29$ (see \Cref{fig:KSgam029}), the solution initially evolves rapidly toward a state with a mixture of blue and black regions. From there, the area occupied by black regions tends to increase, leading to a structure similar to those observed in experiments. Subsequently, as before, the patterns evolve slowly, with small polygons gradually shrinking and disappearing. The resulting stationary pattern resembles a degenerate network of cells.
\begin{figure}
    \centering
    \includegraphics[height=3cm,width=4cm]{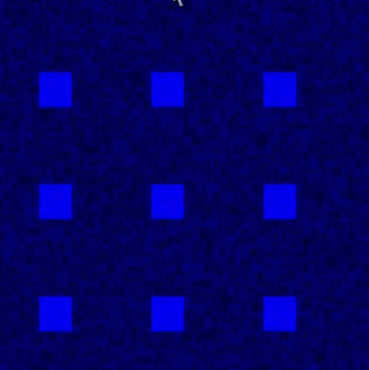}
    \includegraphics[height=3cm,width=4cm]{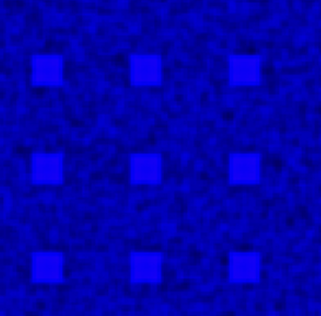}
    \includegraphics[height=3cm,width=4cm]{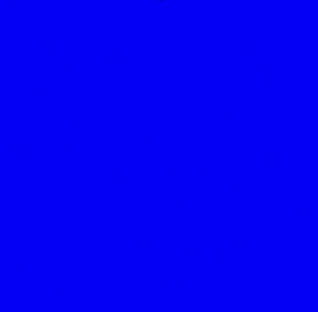}
    \caption{Solution of the modified KS model \eqref{eq:mKS} with $\gamma=0.1$ and $t\in \{0,0.6,2.4\}$, from left to right. Only $u$ is represented.  The solution evolves toward the constant stationary solution $(u,v)=(1,\frac{c}{e})$.}
    \label{fig:KSgam01}
\end{figure}

\begin{figure}
    \centering
    \includegraphics[height=3cm,width=4cm]{CIGam01.png}
    \includegraphics[height=3cm,width=4cm]{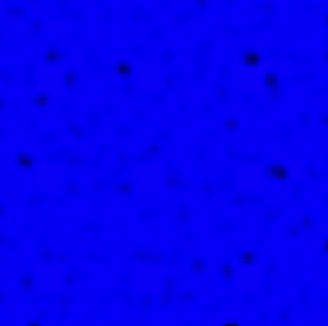}
    \includegraphics[height=3cm,width=4cm]{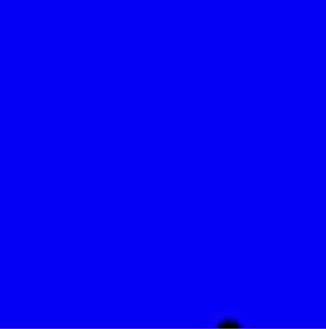}\\
    \includegraphics[height=3cm,width=4cm]{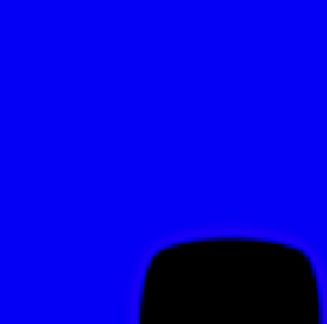}
    \includegraphics[height=3cm,width=4cm]{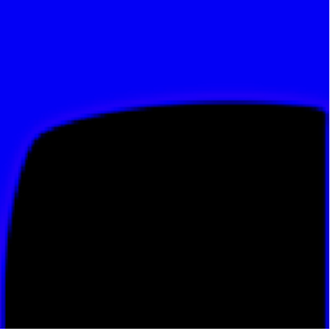}
    \includegraphics[height=3cm,width=4cm]{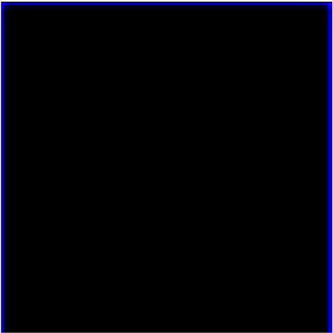}
    \caption{Solution of the modified KS model \eqref{eq:mKS} with $\gamma = 0.2$ at times $t \in {0, 2, 4.6, 28.2, 67, 180}$, shown from left to right and top to bottom. Only $u$ is represented. After a very short transient period, the solution evolves toward the constant stationary state $(u, v) = (1, \frac{c}{e})$ almost everywhere, except for a small region near the bottom-right corner, where it remains at $(0,0)$. From there, a traveling-wave–type behavior is observed: the solution progressively decreases toward $(0,0)$, propagating from the bottom-right corner toward the top-left corner. Along the left, top, and right boundaries, the solution remains close to $0.9$. }
    \label{fig:KSgam02}
\end{figure}

\begin{figure}
    \centering
    \includegraphics[height=3cm,width=4cm]{CIGam01.png}
    \includegraphics[height=3cm,width=4cm]{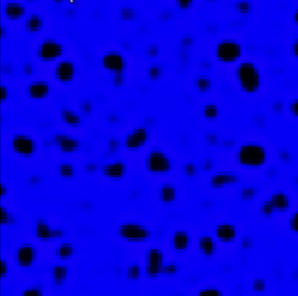}
    \includegraphics[height=3cm,width=4cm]{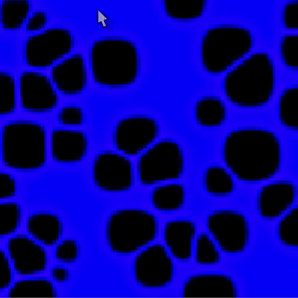}\\
    \includegraphics[height=3cm,width=4cm]{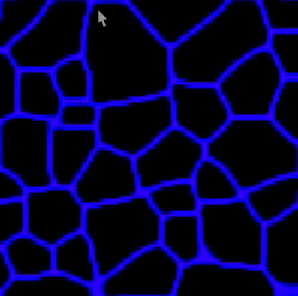}
    \includegraphics[height=3cm,width=4cm]{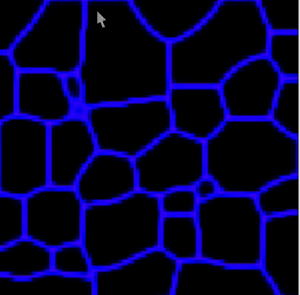}
    \includegraphics[height=3cm,width=4cm]{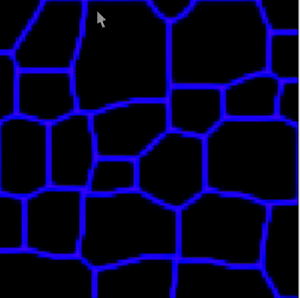}
    \caption{Solution of the modified KS model \eqref{eq:mKS} with $\gamma = 0.25$ at times $t \in {0, 2.5, 7.8, 19.2, 46.3, 180}$, shown from left to right and top to bottom. Only $u$ is represented. The solution initially evolves rapidly toward a state dominated by blue regions (close to the constant stationary solution $(u, v) = (1, \frac{c}{e})$), with small patches of black regions (close to $(u, v) = (0,0)$). From there, the black regions propagate, forming a network-like structure of cells similar to those observed in experiments. The patterns then evolve slowly, with small polygons gradually shrinking and disappearing after a transition through blue. The final state is a network that appears to correspond to a stable minimal-energy configuration, representing the non-homogeneous stationary solution.}
    \label{fig:KSgam025}
\end{figure}
%% If you have bib database file and want bibtex to generate the
%% bibitems, please use
%%

\begin{figure}
    \centering
    \includegraphics[height=3cm,width=4cm]{CIGam01.png}
    \includegraphics[height=3cm,width=4cm]{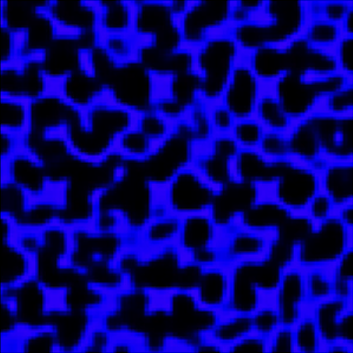}
    \includegraphics[height=3cm,width=4cm]{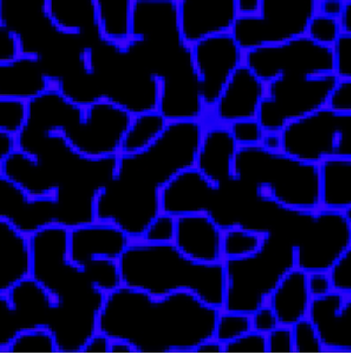}\\
    \includegraphics[height=3cm,width=4cm]{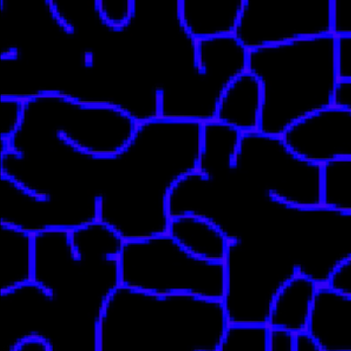}
    \includegraphics[height=3cm,width=4cm]{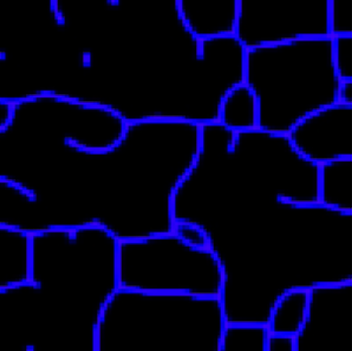}
    \includegraphics[height=3cm,width=4cm]{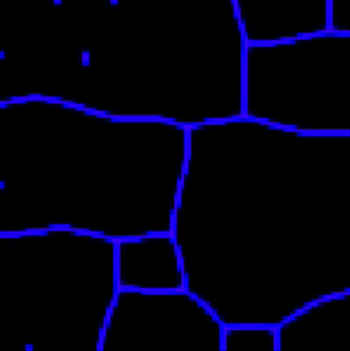}
    \caption{Solution of the modified KS model \eqref{eq:mKS} with $\gamma = 0.29$ at times $t \in {0, 2.8, 6.5, 18.5, 32.4, 180}$, shown from left to right and top to bottom. Only $u$ is represented. The solution initially evolves rapidly toward a state with both blue and black regions. From there, the black regions gradually expand, forming a cell-like network similar to those observed in experiments. The patterns then evolve slowly, with small polygons shrinking and disappearing, resulting in a degenerated network of cells.}
    \label{fig:KSgam029}
\end{figure}

\begin{figure}
    \centering
    \includegraphics[height=3cm,width=4cm]{CIGam01.png}
    \includegraphics[height=3cm,width=4cm]{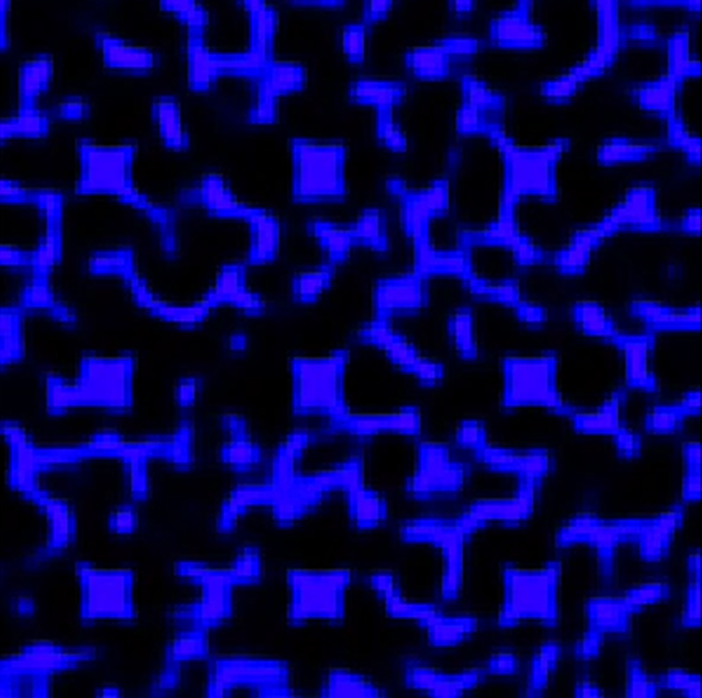}
    \includegraphics[height=3cm,width=4cm]{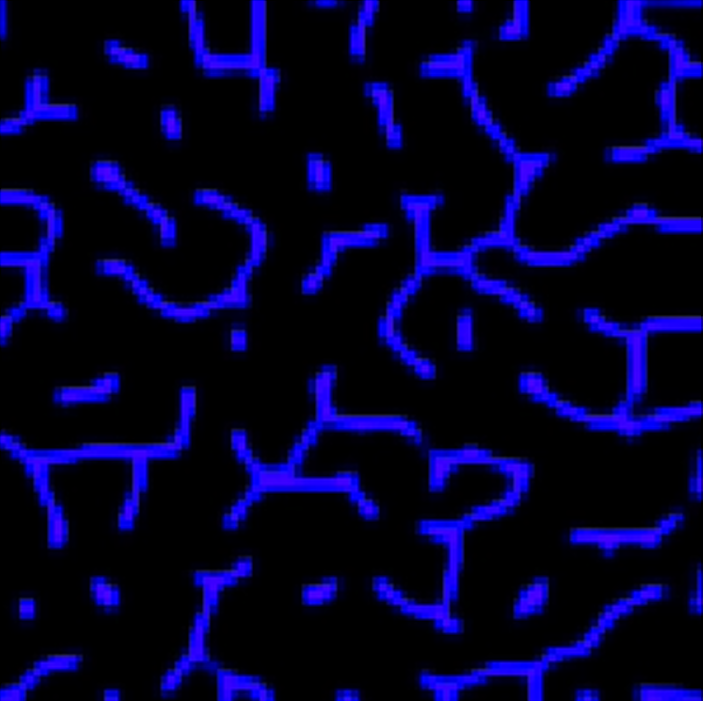}\\
    \includegraphics[height=3cm,width=4cm]{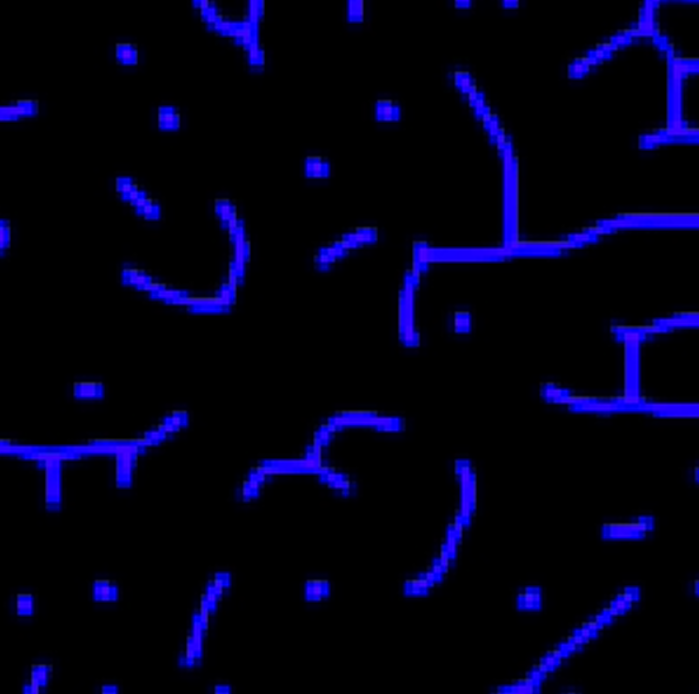}
    \includegraphics[height=3cm,width=4cm]{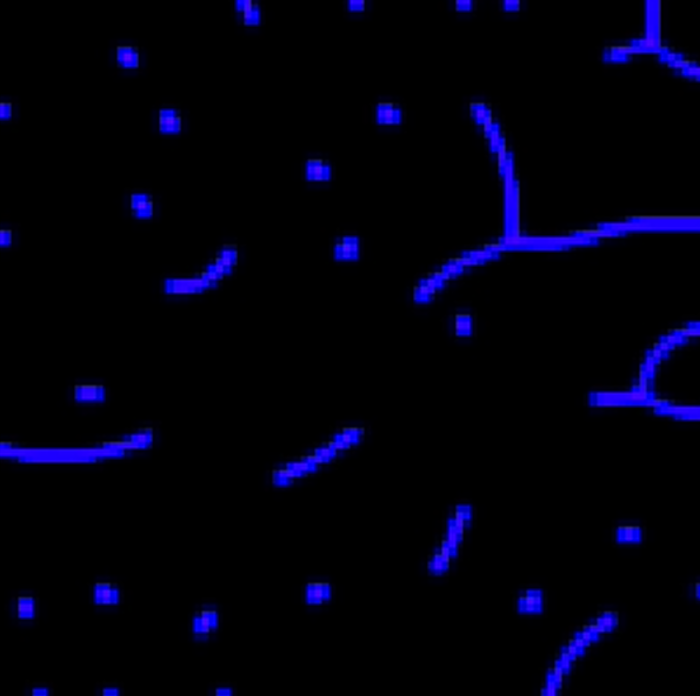}
    \includegraphics[height=3cm,width=4cm]{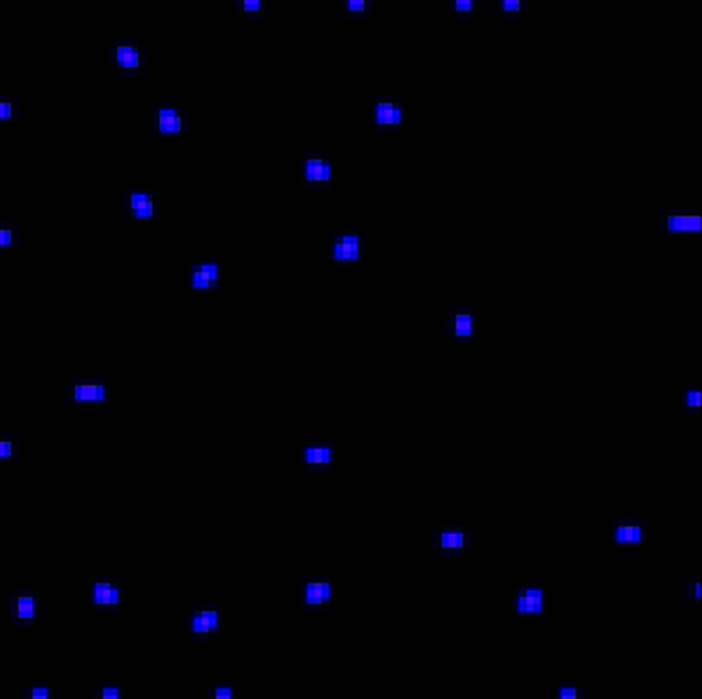}
    \caption{Solution of the modified KS model \eqref{eq:mKS} at six fixed times spanning $t = 0$ to $180$, shown from left to right and top to bottom. Only $u$ is represented. The solution initially evolves rapidly toward a state with both blue and black regions. In this case, no polygons form. At the final stage, a few blue spots remain on a predominantly black background.}
    \label{fig:KSgam03}
\end{figure}
\section{Data Driven reproduction of Chemo-attractant evolution}
In this section, we describe a procedure to reconstruct the temporal evolution of the cell patterns by combining elements of Eq. \eqref{eq:mKS} with experimentally observed data. To this end, fifteen microscopy images were used to generate the reconstruction; six representative images are shown in Fig.~\ref{fig:original_images}. The resulting movie illustrating the evolution of the cellular patterns is provided as supplementary material accompanying the manuscript.

\begin{figure}
    \centering
        \includegraphics[height=3cm,width=4cm]{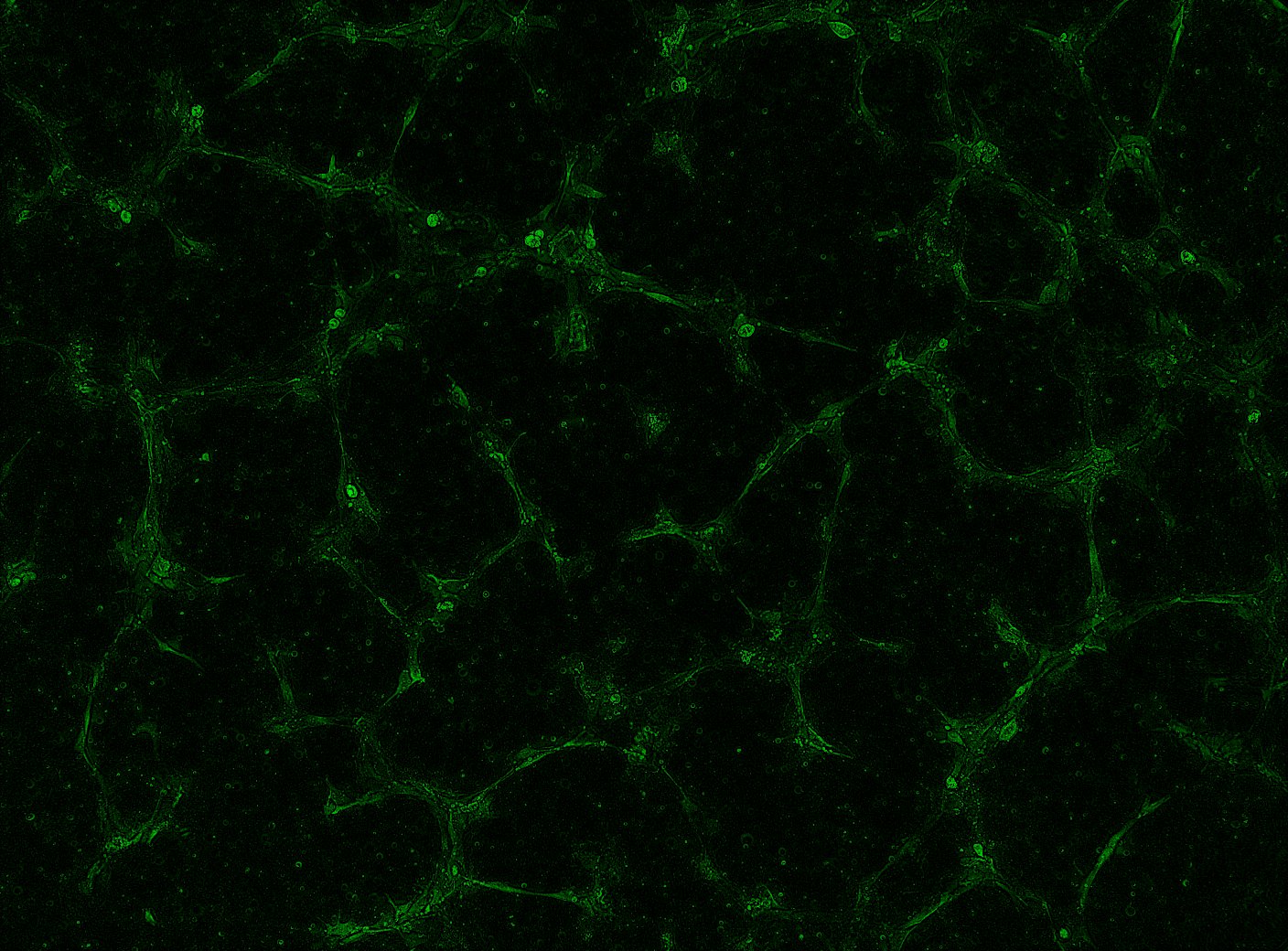}
            \includegraphics[height=3cm,width=4cm]{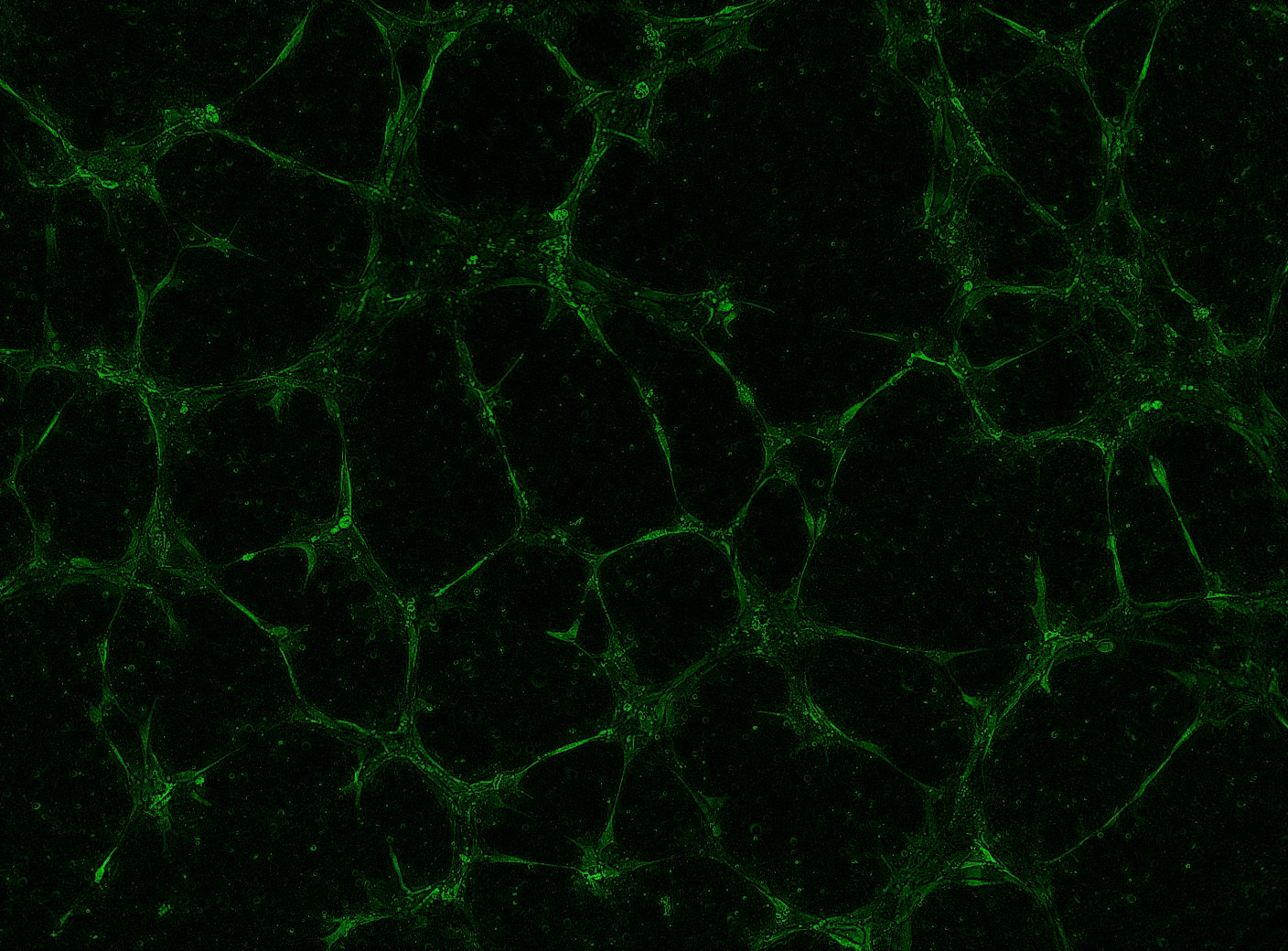}
                \includegraphics[height=3cm,width=4cm]{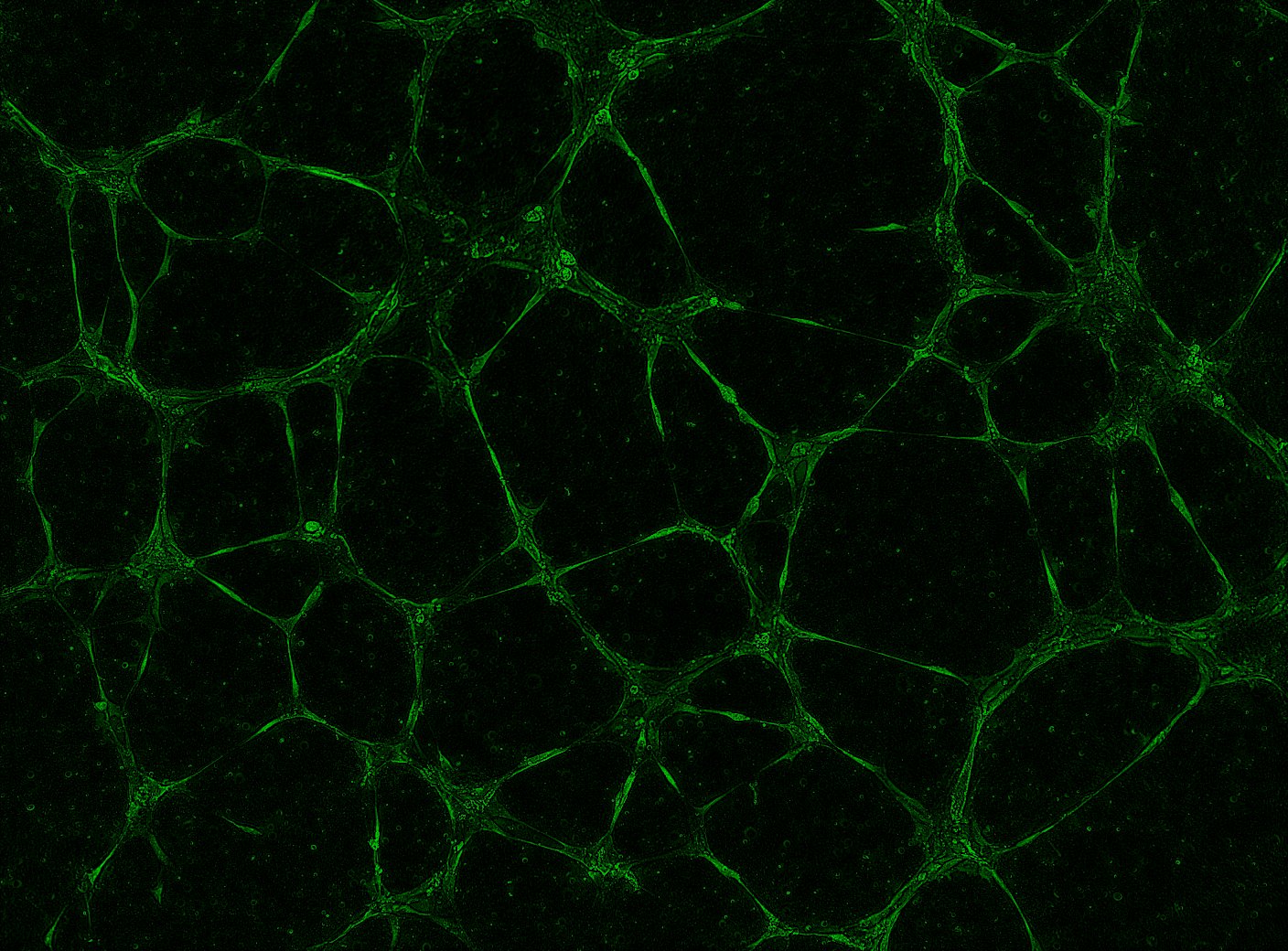}\\
                 \includegraphics[height=3cm,width=4cm]{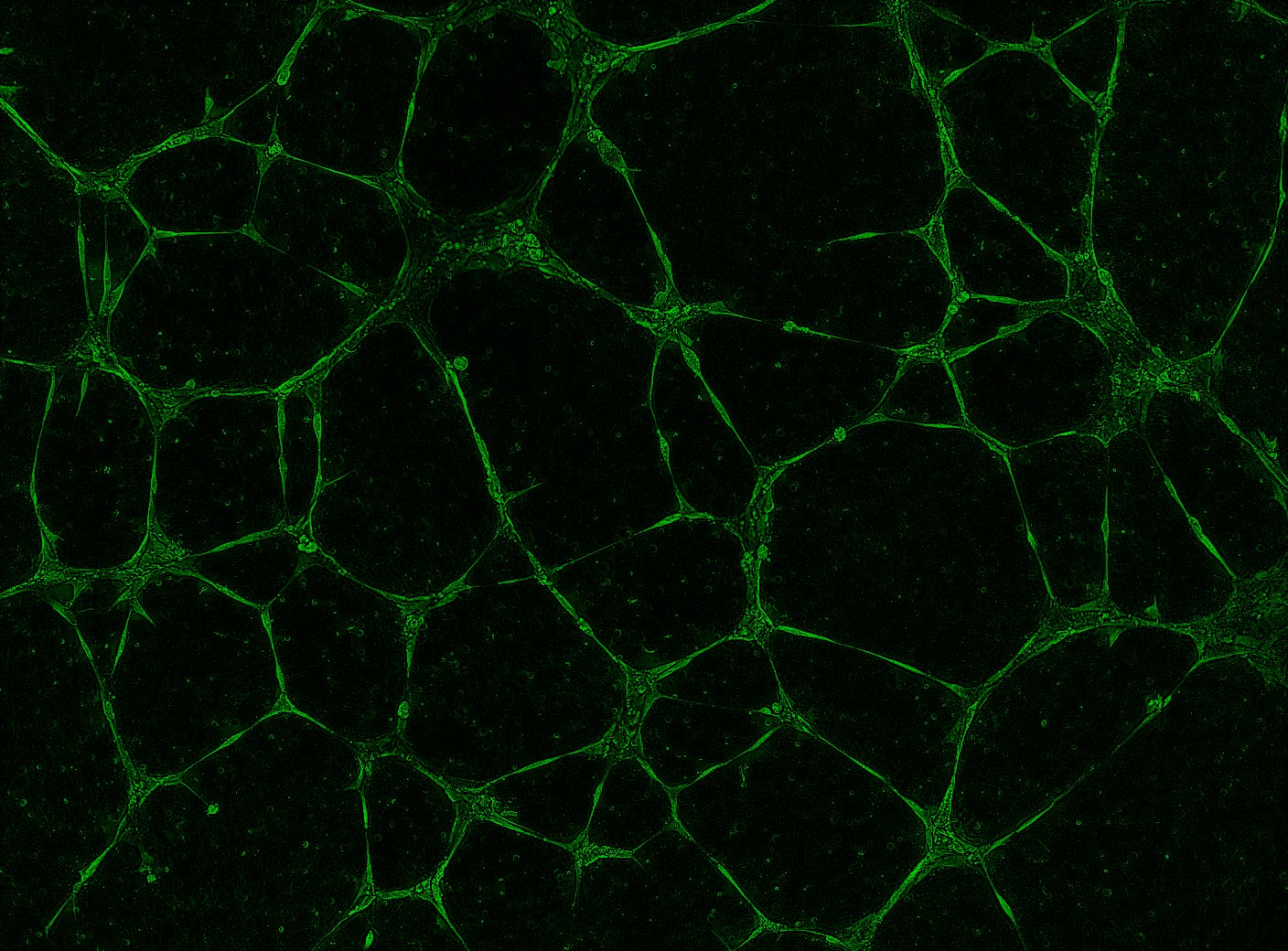}
        \includegraphics[height=3cm,width=4cm]{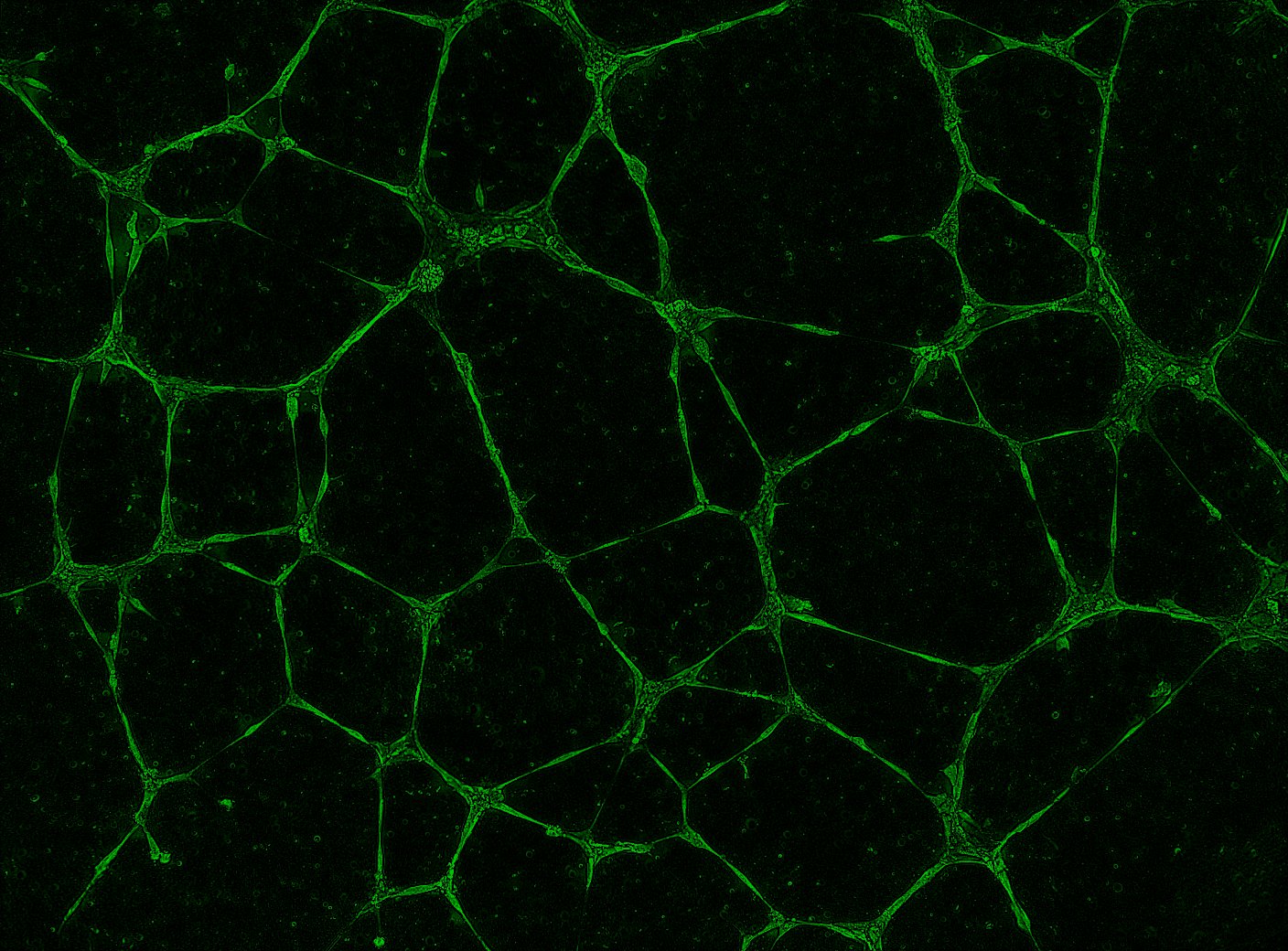}
            \includegraphics[height=3cm,width=4cm]{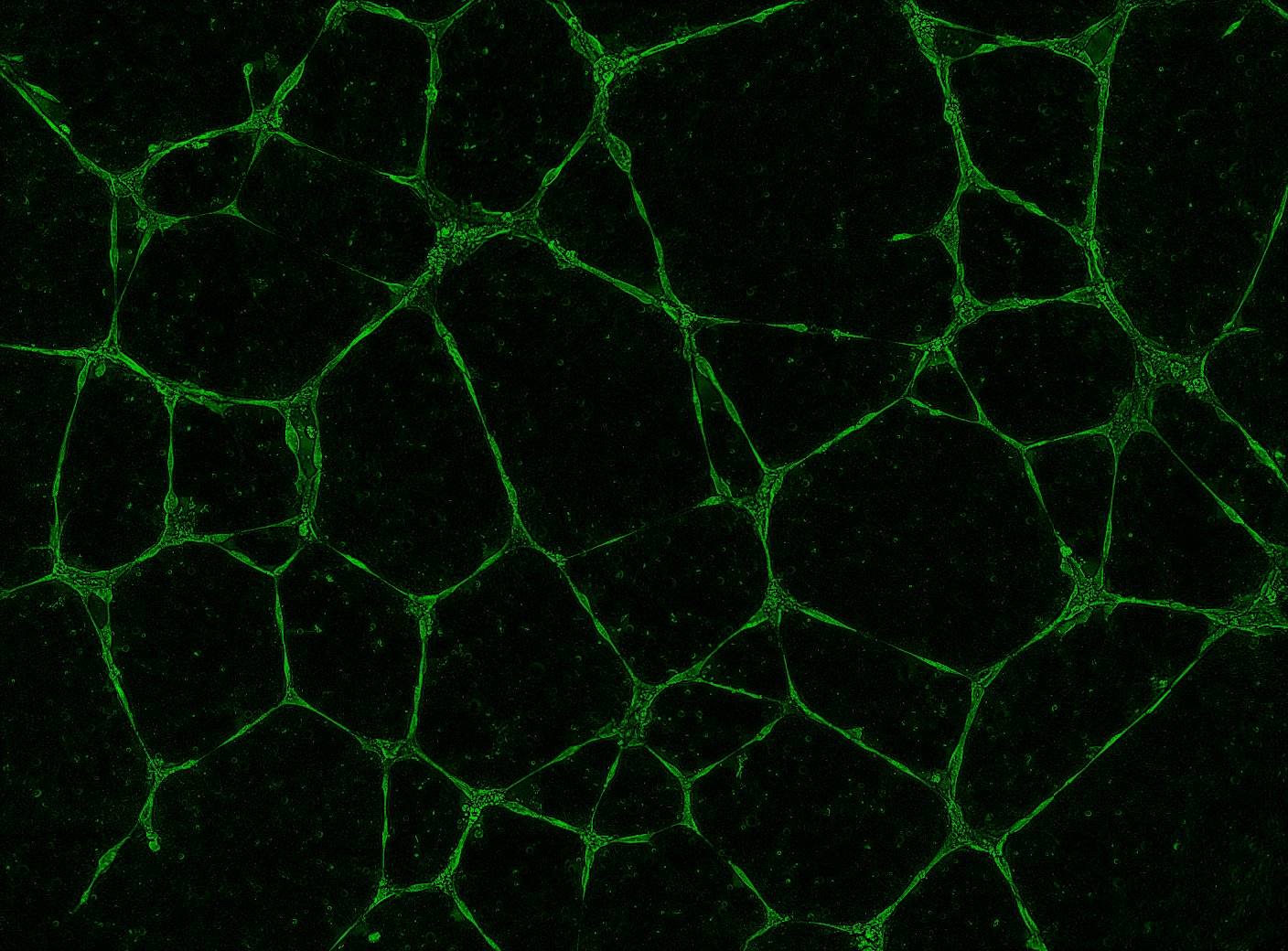}

    \caption{Original images of BMEC obtained by microscopy at the Elahi Lab at times t=2,4,6,8,10 and 12 hours.}
    \label{fig:original_images}
\end{figure}
Our objective is to find the value of fields $v$ that correspond to the temporal evolution of the cellular patterns. To this end, we treat two consecutive microscopy images, denoted by $u(t)$ and $u(t+1)$, as given data and compute the corresponding field 
$v$ by solving the equation
\begin{equation}
         u(t+1)-u(t)=f(u(t))-b\nabla \cdot (u(t)\nabla v(t))+d_u\Delta u(t)  
\end{equation}
or equivalently
\begin{equation}
\label{eq:elliptic_v}
-\nabla u(t)\cdot \nabla v(t)-u\Delta v(t)=\frac{1}{b}\Big( u(t+1)-u(t)-f(u(t))-d_u\Delta u(t) \Big)         
\end{equation}
Equation \eqref{eq:elliptic_v} is an elliptic partial differential equation for 
$v$. Since $u>0$, classical elliptic theory ensures the existence and uniqueness of the solution up to an additive constant. However, solving this equation numerically remains challenging. To address this issue, we adapted the GMRES (Generalized Minimal Residual) method, an iterative Krylov subspace algorithm that computes approximate solutions by minimizing the residual norm at each iteration \cite{SaadSchultz1986}. The details of the numerical procedure are provided in the Appendix, and the corresponding C++ implementation is available. The six obtained fields $v$ at $t=2,4,6,8,10$ and $12$ are illustrated in figure \ref{fig:fields_v} .

\begin{figure}
    \centering
        \includegraphics[height=3cm,width=4cm]{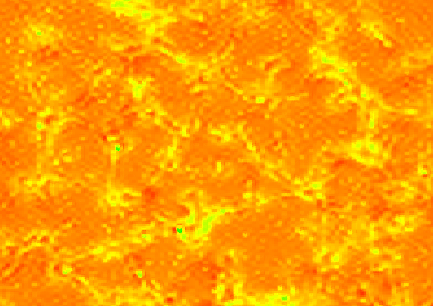}
            \includegraphics[height=3cm,width=4cm]{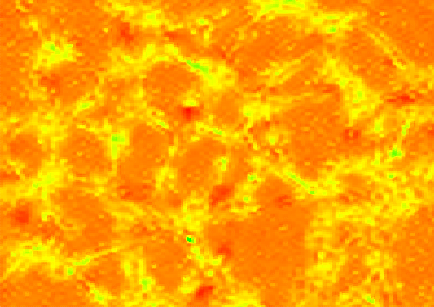}
                \includegraphics[height=3cm,width=4cm]{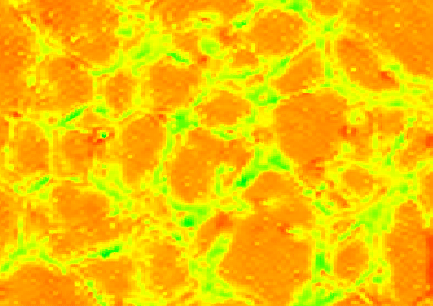}\\
                 \includegraphics[height=3cm,width=4cm]{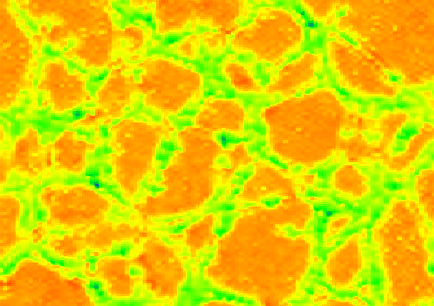}
        \includegraphics[height=3cm,width=4cm]{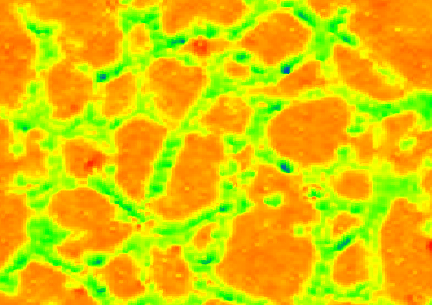}
            \includegraphics[height=3cm,width=4cm]{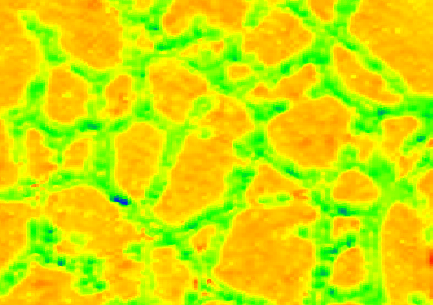}

    \caption{The six fields $v$ obtained by solving equation \eqref{eq:elliptic_v} at $t=2,4,6,8,10$ and $12$ hours.}
    \label{fig:fields_v}
\end{figure}

\section{Numerical Simulations: a three time-phase structure for $\gamma=0.25$}
\label{sec:NS3timephase}
In this section, we further proceed with a more in-depth qualitative analysis of the numerical solutions for $\gamma=0.25$. Our goal is to investigate what are the dynamical mechanisms that lead to the patterns observed in the previous section for this particular value. Since in the setting considered here, the stationary patterns appear visually as networks of a blue color in a black background, we focus on how solutions evolve in time, at some fixed spatial close positions, where $u$ is asymptotically close to its maximum value (close to the value one which appears in blue)  or close to its minimum value (close to zero which appears in black). Interestingly, when looking at the time evolution at these fixed positions, we can distinguish three phases in time. This seems to correspond to dynamics at different time scales: very fast, fast and slow. Biological processes with different time scales have been widely observed and reported, specifically in neuroscience context. It has also been studied theoretically. We refer for example to the textbook \cite{BookKuehn} and the numerous references cited in there; the topic is commonly referred as slow-fast dynamics in the field of dynamical systems and computational neuroscience. A detailed investigation of the theoretical slow-fast structure of the system under consideration is not in the scope of this article. \Cref{fig:uvtgam025} illustrates the time evolution of relevant quantities: in the left panels, we represent the time evolution of $u$, $\Delta u$ and $\nabla u \cdot \nabla v$, while in the right panels, we illustrate the time evolution of $v$ and $\Delta v$. This allows us to get a glimpse about the factors that drive the dynamics locally in space. To highlight the mechanisms at play, we describe hereafter in some detail two opposite dynamical behaviors (at fixed space positions) for which $u$ is asymptotically close to its maximum and minimum values (see \Cref{fig:uvtgam025}, first and third row). Analog arguments hold for dynamics at other positions. The first row of \Cref{fig:uvtgam025} illustrates the dynamics of the above mentioned quantities at the space point $(50,50)$ for which $u$ is asymptotically at a local maximum. We can distinguish three phases. During the first phase, $u$ increases rapidly. Analog behavior is observed for $v$. The term $\nabla u \cdot \nabla v$ is close to zero ( $i.e.$ $\nabla u$ and $\nabla v$ are almost orthogonal) and negligible. The term $\Delta  u$ is positive. This contributes to the fast increase in $u$. We note however that $\Delta  u$ decreases with time. This means that at this point, $u$ is smaller than the average values of its neighbors, but tends to increase faster than its neighbors. Meanwhile, analogously, $\Delta v$ is positive and decreases. At some point in time, we observe a sudden change in the time derivative of $u$. We associate this sudden change with the start of the second phase. After that, the variable $u$ is still increasing, but in a slower fashion, meanwhile, $\Delta  u$ becomes negative which contributes to slowing down $u$. The sudden change in the evolution of the variable $v$ is even more spectacular. The second phase starts indeed with a sudden change in the time derivative of $v$ from positive to negative: the variable $v$ starts to decrease abruptly during the second phase.  $\Delta v$ becomes negative and decreases. Negative values of $\Delta v$ contribute to the increase of $u$ and the decrease of $v$. Note that $b$ and $d_v$ have large values which intensifies the impact of $\Delta v$ values. At some point, we observe again an abrupt change in the time derivative for quantities $u$, $v$, $\Delta u$ and $\Delta v$: the time derivative becomes close to zero and we associate this with the beginning of the third phase. After that, all the quantities remain almost steady with a notable small increase in $u$ and $\Delta u$.  We note that $u$ and $v$ reach stationary positive values, which are local maximums. Accordingly, $\Delta u$ and $\Delta v$ reach stationary negative values. We note that at the equilibrium, for the first equation $u>1$ which means that $f(u)<0$, $\Delta u<0$ (negative contributions), $-bu\Delta v>0$ (positive contribution). Here is an effect of the chemotaxis term: a local maximal concentration in $v$ contributes to a local maximal concentration in $u$, and compensates the negative contributions of the growing term (here shrinking) $f(u)$ and the diffusive term $\Delta u$. The third row of \Cref{fig:uvtgam025} illustrates the dynamics at the spatial point $(x,y)=(52,50)$.  During the first phase, as described for $(x,y)=(50,50)$, $u$ and $v$ increase rapidly. The term $\nabla u \cdot \nabla v$ increases slowly. The term $\Delta  u$ has a small fast increase followed by a small fast decrease. This interestingly tells us about the growing dynamics of $u$ in comparison to its neighbors. Since $\Delta u>0$ during this phase, the quantity $u$ is lower than the average value of its neighbors; when $\Delta u$ increases it tells us that the neighbors are growing faster and when $\Delta u$ decreases it tells us that the quantity $u$ is growing faster at this space point than at the neighboring positions in average. At the end of phase $1$, it has a small positive value. We note that since $u\in (\gamma, 1)$, the growth factor $f(u)$ has an increasing effect. During phase 1, $\Delta v$ has a small increase followed by a small decrease and reaches a value close to zero.  In this case, we observe that the increase in $u$ is caused by the growth and diffusive factors. The chemotaxis terms have a small negative effect.  During phase 2, $u$ and $v$ have a fast decrease. This contrasts with the observed behavior at $(x,y)=(50,50)$. $\nabla u \cdot \nabla v$ continues to increase at a slow constant rate.   $\Delta  u$ increases rapidly and reaches a high positive value. $\Delta v$ has a small slow increase. From this observation, we deduce that $\nabla u \cdot \nabla v$ and $\Delta v$, $i.e$ the chemotaxis terms, are the factors that first contribute to the decrease of $u$. Even though, they seem to be small it is sufficient  because of the large values of $b$ and $d_v$. At some point, $u$ crosses $\gamma$ downwards which means that $f(u)$ starts also to contribute to the decrease in $u$.   During the last phase, $v$, $\Delta v$ and $\nabla u \cdot \nabla v$ remain constant. There are some slight evolutions for $u$ and $\Delta u$, but at some point all quantities reach a stationary state. By contrast with the observation made at $(x,y)=(50,50)$, we note that at the equilibrium, for the first equation, $u$ reach a quite low value with $u\in(0,\gamma)$, which means that $f(u)<0$. We note also that $-bu\Delta v<0$, $-b\nabla u \cdot \nabla v<0$. All there are negative contributions. The term $\Delta u>0$ contributes positively. We briefly mention the dynamics at the spatial point $(54,50)$ at which $u$ is at its local minimum, see the last row of \Cref{fig:uvtgam025}. Here, the dynamics are close to that described at $(x,y)=(52,50)$. The main difference being that $u$ is almost equal to $0$ and $\Delta u$ is  much smaller. We have also that $\nabla u \cdot \nabla v$ is almost $0$. At the end, we observe that $u(50,50)>u(51,50)>u(52,50)>u(53,50)>u(54,50)$.

% \Cref{fig:uvtgam025-2} illustrates the dynamics at the space point $(51,50)$. During the first phase, $u$ increases rapidly, as well as $v$. The term $\nabla u \cdot \nabla v$ increases slowly. The term $\Delta  u$ increases first rapidly from zero and then decreases rapidly but remains positive. This contributes to the rapid increase in $u$ among with the positive value of $f(u)$ for $u\in (\gamma,1)$. We recall that a positive value of the Laplacian means that at this point $u$ is smaller than the average values of its neighbors; its dynamics further indicate that it has first a slower increase than its neighbors and then a faster increase. During phase 1, $\Delta v$ increases by a little and then decreases. Phase 2 starts with a sudden change in the time derivative. $u$ starts to decrease slowly. $v$ starts to decrease. $\nabla u \cdot \nabla v$ continues to increase at a constant rate.   $\Delta  u$ decreases and reach negative values. So do $\Delta v$, it decreases and becomes negative. During the last phase, $v$, $\Delta v$ and $\nabla u \cdot \nabla v$ do not move so much. There is some evolution for $u$ and $\Delta u$, but at some point, all  quantities reach a stationary state. At the end, we observe that $u(50,50)>u(51,50); v(50,50)>v(51,50); \Delta u(51,50)<0<\Delta u(52,50); \Delta v(50,50)<\Delta v(51,50)<0$. 

\begin{figure}
   \centering

    \begin{tabular}{cc}
        \includegraphics[height=2.2cm,width=5cm]{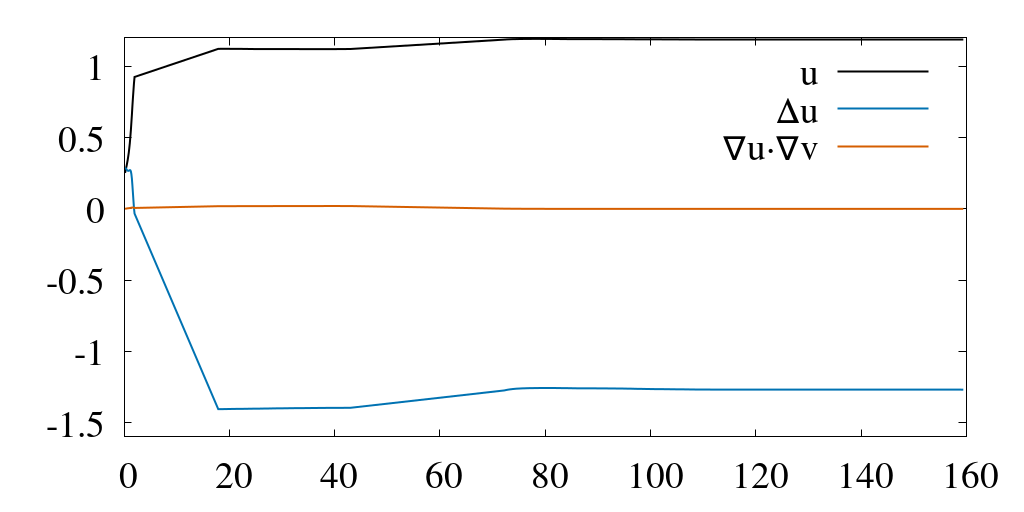} &
        \includegraphics[height=2.2cm,width=5cm]{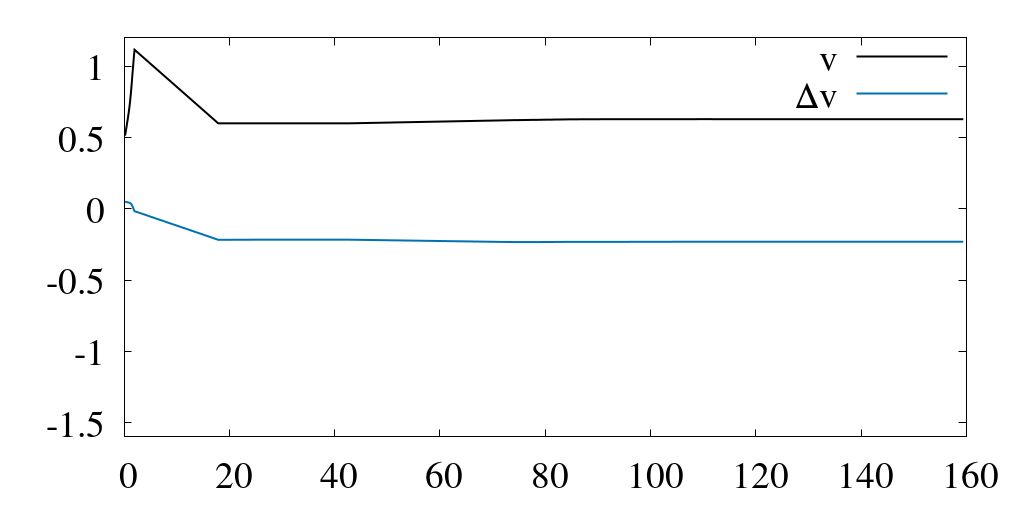} \\

        \includegraphics[height=2.2cm,width=5cm]{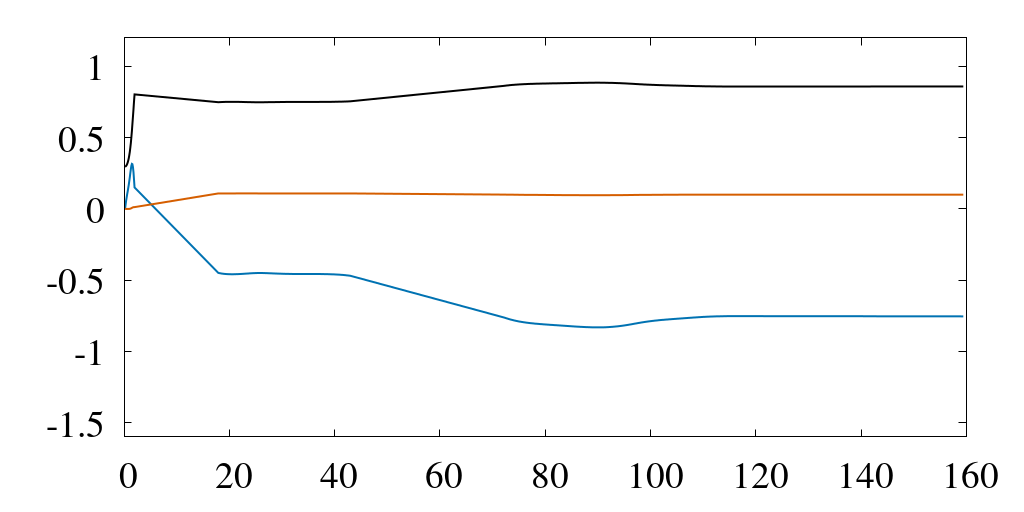} &
        \includegraphics[height=2.2cm,width=5cm]{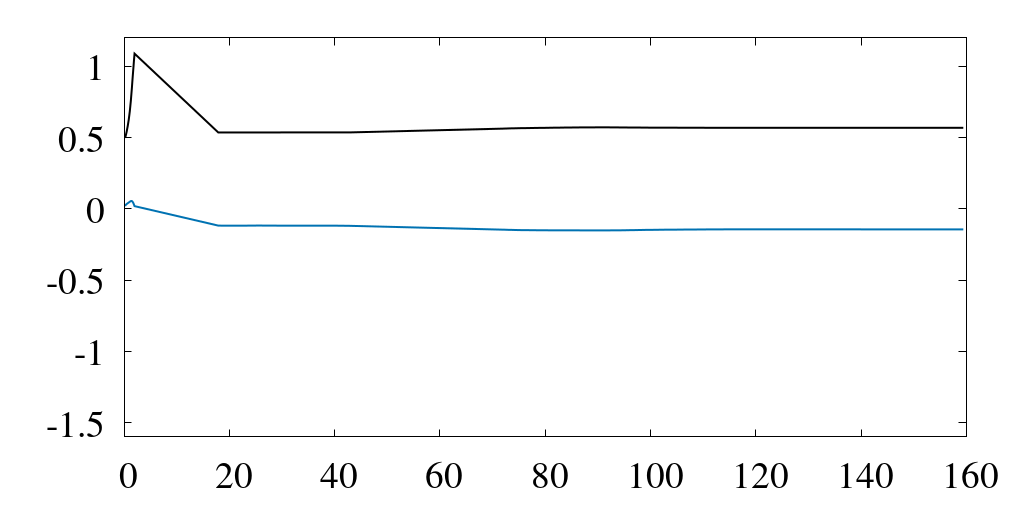} \\

        \includegraphics[height=2.2cm,width=5cm]{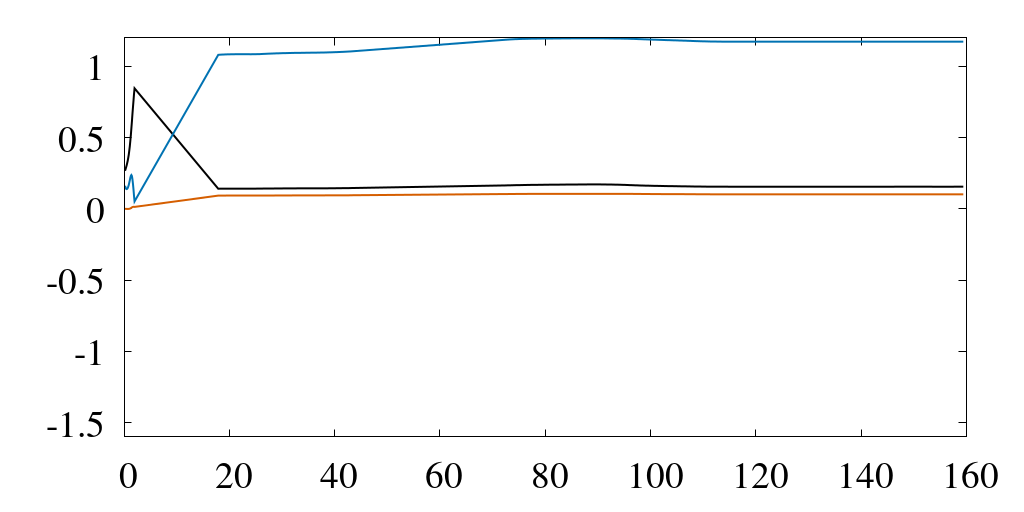} &
        \includegraphics[height=2.2cm,width=5cm]{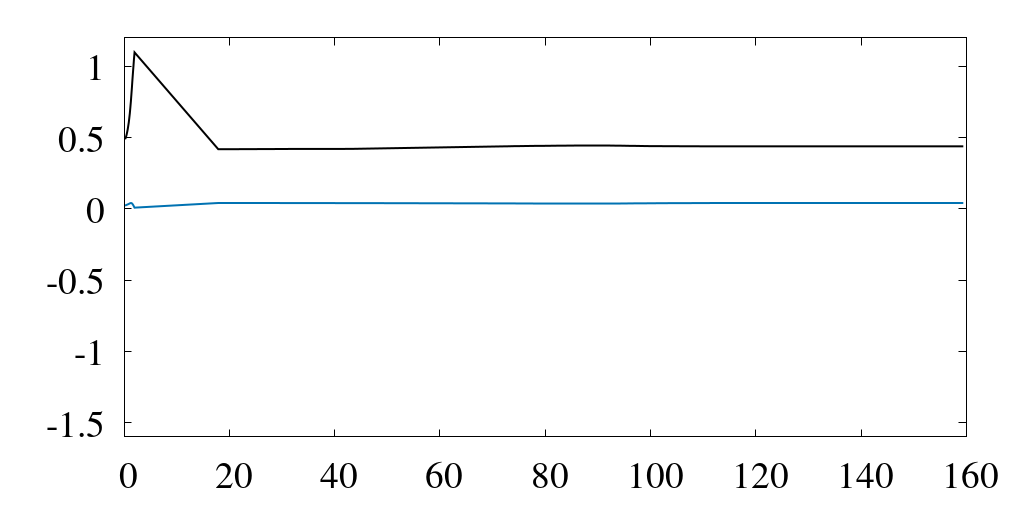} \\

        \includegraphics[height=2.2cm,width=5cm]{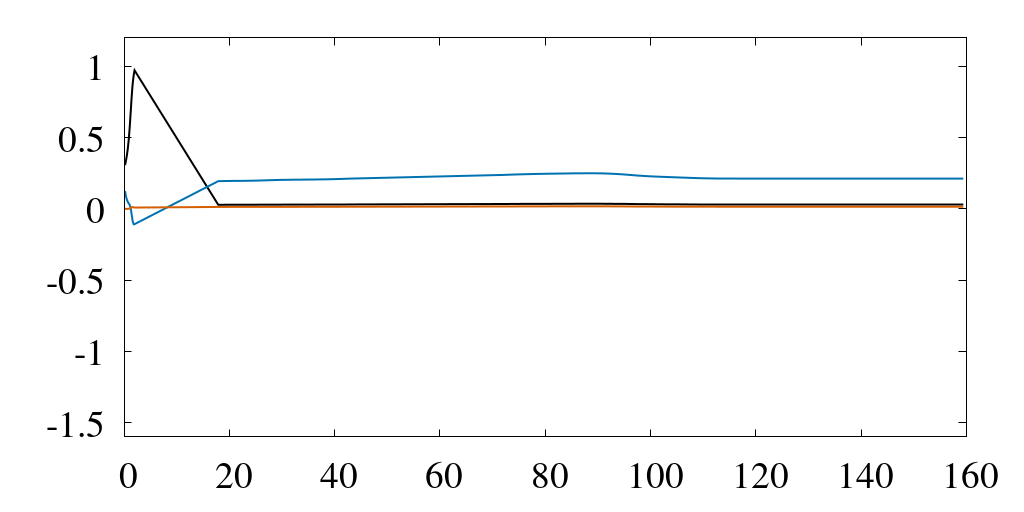} &
        \includegraphics[height=2.2cm,width=5cm]{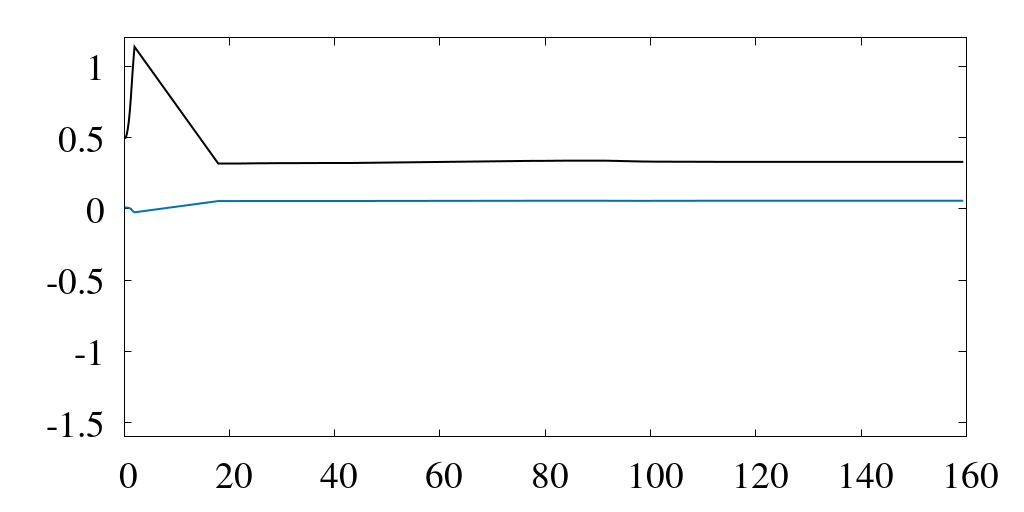} \\

        \includegraphics[height=2.2cm,width=5cm]{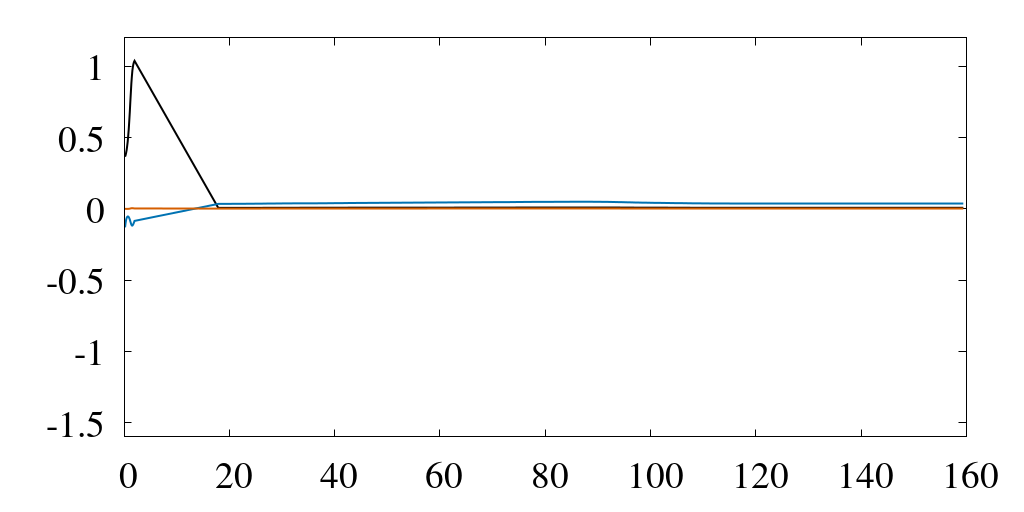} &
        \includegraphics[height=2.2cm,width=5cm]{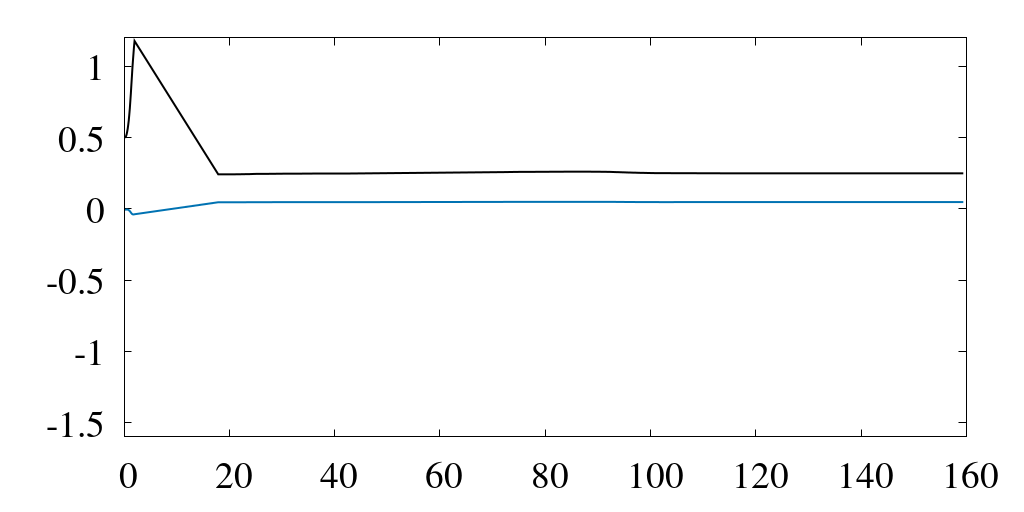}
    \end{tabular}
   % \vspace{2mm}
\makebox[0.48\textwidth]{\small t}
\makebox[0.48\textwidth]{\small t}
    \caption{This figure illustrates the time evolution of the solution of the discretized version of the modified KS model \eqref{eq:mKS} with $\gamma=0.25$ at several consecutive fixed spatial positions. The panels on the left show the time evolution of $u$ (black), $\Delta u$ (blue), and $\nabla u \cdot \nabla v$ (orange), while the panels on the right display the evolution of $v$ (black) and $\Delta v$ (blue). A clear temporal structure emerges, particularly in the time course of $v$, with slopes indicating three distinct time scales that may be classified as very fast, fast, and slow. The first row corresponds to the position $(x,y) = (50,50)$. Comparison with the other rows shows that $u$ and $v$ asymptotically approach a local maximum stationary value, while $\Delta u$ and $\Delta v$ converge to a negative stationary value. The second row corresponds to $(x,y) = (51,50)$, where $u$ and $v$ reach a relatively high stationary value compared to neighboring positions, as confirmed by the negative asymptotic values of $\Delta u$ and $\Delta v$. A similar qualitative behavior, albeit with different asymptotic states (e.g., minima instead of maxima), is observed at $(x,y) = (52,50)$ and $(x,y) = (53,50)$.}
    \label{fig:uvtgam025}
\end{figure}

\section{On a $2\times2$ analog coupled model}
With the goal to investigate some mathematical mechanisms at play for a simpler and more tractable model, we propose the following ODE,

\begin{equation}
\label{eq:2times2}
\left \{
\begin{array}{rl}
      u_{1t}=&  f(u_1)-bu_1(v_2-v_1)+d_u(u_2-u_1) \\
      v_{1t}=& cu_1-ev_1+d_v(v_2-v_1)\\
         u_{2t}=&  f(u_2)-bu_2(v_1-v_2)+d_u(u_1-u_2) \\
      v_{2t}=& cu_2-ev_2+d_v(v_1-v_2)
 \end{array}
 \right.
\end{equation}
 The value of parameters are as follows:
\begin{equation}
         a=7, \, d_u=1,\,   c=3, \, e=2,\, d_v=10, \, \gamma=0.25,\, b\in [10,25]
    \label{eq:param-EDO4v}
\end{equation}
\Cref{eq:2times2} is a two$\times$two coupled ODE  which mimics \Cref{eq:mKSbis} in the following manner: the nonlinear factor (growth factor) is the same, the diffusive terms appear for example in  $u_2-u_1$, and the term  $u_1(v_2-v_1)$ is inspired from $u\nabla v$. For certain values of the parameters, \Cref{eq:2times2} admits  four stationary locally asymptotically stable solutions coexisting: $U_1^*=(0,0,0,0), U_3^*=(1,\frac{c}{e},1,\frac{c}{e}), U_8^*=(u_1^{8*},v_1^{8*},u_2^{8*},v_2^{8*}), U_9^*=(u_1^{9*},v_1^{9*},u_2^{9*},v_2^{9*})$ with  $u_1^{9*}=u_2^{8*}$ and
$\gamma<u_2^{8*}<1<u_1^{8*}$. Therefore, \Cref{eq:2times2} admits stationary solutions that represent two high concentration of cells , no cells, and coexistence of high and low concentration of cells. We note that there is symmetry in the system: if $(u_1,v_1,u_2,v_2)(t)$ is a solution, so is $(\tilde{u}_1,\tilde{v}_1,\tilde{u}_2,\tilde{v}_2)(t)=(u_2,v_2,u_1,v_1)(t)$. We start with the following proposition that ensures that variables remain positive.
\begin{prop}
 The set $G=\{u_1>0,v_1>0,u_2>0,v_2>0\}$ is positively invariant for \Cref{eq:2times2}. 
\end{prop}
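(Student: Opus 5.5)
The plan is to show that a trajectory starting in $G$ can never reach the boundary $\partial G$, where at least one coordinate vanishes. The argument first treats the $u$-components, then the $v$-components, and finally combines them by a first-exit-time argument. Throughout, we use that the right-hand side of \Cref{eq:2times2} is polynomial, hence locally Lipschitz. Solutions therefore exist, are unique and depend continuously on time on a maximal interval $[0,T_{\max})$.

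\textbf{The $u$-components.} The key observation is that the $u_1$-equation can be rewritten as
\[
u_{1t}=u_1\,g_1(t)+d_u u_2,\qquad g_1(t)=a(1-u_1)(u_1-\gamma)-b(v_2-v_1)-d_u,
\]
and symmetrically for $u_2$. Here $g_1$ is continuous along the trajectory. By variation of constants,
\[
u_1(t)=u_1(0)e^{\int_0^t g_1}+d_u\int_0^t e^{\int_s^t g_1}u_2(s)\,ds .
\]
So as long as $u_2\ge 0$, we get $u_1(t)\ge u_1(0)e^{\int_0^t g_1}>0$. The same holds with the indices exchanged.

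\textbf{The $v$-components.} Similarly,
\[
v_{1t}=-(e+d_v)v_1+cu_1+d_v v_2 .
\]
Hence
\[
v_1(t)=v_1(0)e^{-(e+d_v)t}+\int_0^t e^{-(e+d_v)(t-s)}\big(cu_1(s)+d_vv_2(s)\big)\,ds,
\]
which is $>0$ as long as $u_1,v_2\ge 0$. The same holds for $v_2$.

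\textbf{First-exit-time argument.} To close the circular dependence, define
\[
t^*=\sup\{t<T_{\max}: u_1,v_1,u_2,v_2>0 \text{ on } [0,t]\}.
\]
We have $t^*>0$ by continuity, since the initial point lies in the open set $G$. Suppose $t^*<T_{\max}$. On $[0,t^*]$ all four components are $\ge 0$ by continuity. The representation formulas above then give strictly positive values of all four components at $t^*$, because each is bounded below by (positive initial value)$\times$(positive exponential). Continuity then extends positivity beyond $t^*$, contradicting the definition of $t^*$. Hence the solution stays in $G$ on its whole interval of existence, which is the claimed positive invariance.

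The only delicate point is this circularity: positivity of each variable relies on nonnegativity of the others. The exit-time argument handles it, together with the remark that $g_1,g_2$ are finite on compact subintervals of $[0,T_{\max})$. Global existence is not required for the statement, but if desired one could add a bound from the cubic $f$ giving $u_i$ bounded above, which prevents blow-up.

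Equivalently, one can argue directly on the faces of $\partial G$. On $\{u_1=0\}$ with the other coordinates positive, $u_{1t}=d_u u_2>0$. On $\{v_1=0\}$, $v_{1t}=cu_1+d_vv_2>0$. So the vector field points strictly inward, which is the quasi-positivity property of the system. The integral formulation above is preferable, however, because it also handles corners where several coordinates vanish at once.
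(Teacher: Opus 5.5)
Your proof is correct, but it takes a different route from the paper. The paper's proof is a face argument, essentially the one in your last paragraph. It notes that $\{u_1=u_2=0\}$ is invariant (there $u_{1t}=u_{2t}=0$), that $u_{1t}=d_u u_2>0$ on the face $u_1=0$ when $u_2>0$, and that ``the same argument holds for the other variables''.

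The invariance remark is what deals with your objection about corners. At a first exit time $t^*$, each coordinate that vanishes must have nonpositive derivative. Since $u_{1t}=d_uu_2$ on $\{u_1=0\}$ and $v_{1t}=cu_1+d_vv_2$ on $\{v_1=0\}$ (and symmetrically for $u_2,v_2$), any vanishing coordinate forces $u_1(t^*)=u_2(t^*)=0$. That case is excluded by uniqueness: a trajectory starting in $G$ cannot reach the invariant set $\{u_1=u_2=0\}$ in finite time. So the paper's sketch is complete in principle, but it leaves this exit-time bookkeeping implicit.

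Your variation-of-constants formulation rests on writing $u_{1t}=u_1g_1+d_uu_2$ and $v_{1t}=-(e+d_v)v_1+cu_1+d_vv_2$. It buys you the following:
\begin{itemize}
\item It needs no case analysis over corners and no appeal to uniqueness on an invariant subset.
\item It gives explicit lower bounds such as $u_1(t)\ge u_1(0)e^{\int_0^t g_1}$ and $v_1(t)\ge v_1(0)e^{-(e+d_v)t}$.
\item It only uses $c,d_u,d_v\ge 0$. The strict inward-pointing argument uses $d_u>0$ and $c>0$, and would need further invariant-set arguments if these vanished.
\end{itemize}
The cost is a longer write-up, with the first-exit-time argument spelled out rather than left to the reader.
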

\begin{proof}
    We note that the set $u_1=u_2=0$ is an invariant manifold. Next, we observe that $u_1'>0$ at $u_1=0$ and $u_2>0$. The same argument holds for the other variables.   
\end{proof}
The next proposition characterizes the stationary solution.
\begin{prop}
 The stationary solutions of \Cref{eq:2times2} satisfy the following equations. 
\begin{equation}
\label{eq:sta2times2}   
\left\{\begin{array}{rl}
     u_2=&\phi(u_1) \\
    u_1=&\phi(u_2) \\
    v_1=&\alpha_1 u_1 +\alpha_2 u_2\\
    v_2=&\alpha_1 u_2 +\alpha_2 u_1
\end{array}\right.
\end{equation}
with
\begin{equation}
\label{eq:phi}   
     \phi(u_1)=u_1+\frac{f(u_1)}{bu_1(\alpha_1-\alpha_2)-d_u}  
\end{equation}
and
\begin{equation}
\label{eq:paramalpha}   
     \alpha_1=\frac{c(d_v+e)}{(d_v+e)^2-d_v^2},  \,\,    \alpha_2=\frac{c}{d_v((1+\frac{e}{d_v})^2-1)}. 
\end{equation}
Equivalently, stationary solutions of \Cref{eq:2times2} satisfy the fixed point equation
\begin{equation}
    \label{eq:PF}
    u_1=\phi\circ\phi(u_1)
\end{equation}
along with $v_1,u_2,v_2$ solutions of \Cref{eq:sta2times2}. 
\end{prop}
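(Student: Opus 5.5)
The plan is to set all time derivatives in \Cref{eq:2times2} to zero and first solve the two linear equations for $(v_1,v_2)$. Then I substitute the result into the $u$-equations and isolate $u_2$ in terms of $u_1$, which produces the map $\phi$.

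\emph{Step 1: the $v$-equations.} At a stationary point, the $v$-equations form the linear system
\[
(e+d_v)v_1-d_v v_2=cu_1,\qquad -d_v v_1+(e+d_v)v_2=cu_2 .
\]
Its determinant is $(d_v+e)^2-d_v^2=e(e+2d_v)>0$, so Cramer's rule gives a unique solution:
\[
v_1=\frac{c(d_v+e)u_1+cd_v u_2}{(d_v+e)^2-d_v^2},\qquad v_2=\frac{c(d_v+e)u_2+cd_v u_1}{(d_v+e)^2-d_v^2}.
\]
This is the third and fourth line of \eqref{eq:sta2times2} with $\alpha_1$ as in \eqref{eq:paramalpha}. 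For $\alpha_2$, I check that
\[
d_v\big((1+\tfrac{e}{d_v})^2-1\big)=\frac{(d_v+e)^2-d_v^2}{d_v},
\]
so that $\frac{c}{d_v((1+e/d_v)^2-1)}=\frac{cd_v}{(d_v+e)^2-d_v^2}$, as claimed. A key consequence is
\[
v_2-v_1=(\alpha_1-\alpha_2)(u_2-u_1).
\]

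\emph{Step 2: the $u$-equations.} Substituting this difference into the first equation gives
\[
0=f(u_1)-bu_1(\alpha_1-\alpha_2)(u_2-u_1)+d_u(u_2-u_1),
\]
that is,
\[
f(u_1)=(u_2-u_1)\big(bu_1(\alpha_1-\alpha_2)-d_u\big).
\]
Dividing by the bracket yields $u_2=\phi(u_1)$. By the symmetry $(u_1,v_1)\leftrightarrow(u_2,v_2)$ noted before the proposition, the third equation gives $u_1=\phi(u_2)$ in the same way. Composing the two relations gives the fixed point equation \eqref{eq:PF}.

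For the converse (``equivalently''), suppose $u_1$ solves \eqref{eq:PF}. I set $u_2:=\phi(u_1)$, so that $\phi(u_2)=u_1$, and define $v_1,v_2$ by the $\alpha$-formulas. Running the computations of Steps 1 and 2 backwards then shows that all four right-hand sides of \Cref{eq:2times2} vanish.

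\emph{Main obstacle.} The only delicate point is the division in Step 2, which requires $bu_1(\alpha_1-\alpha_2)-d_u\neq 0$ (and likewise at $u_2$). Since $\alpha_1-\alpha_2=c/(2d_v+e)>0$, the denominator vanishes only at the single value $u_1=d_u(2d_v+e)/(bc)$. At that value, the stationary equation forces $f(u_1)=0$, and $u_2$ is left undetermined by this equation alone.

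I would handle this exceptional value separately. Either I state the proposition under the implicit assumption that the denominators do not vanish (which is how $\phi$ is written), or I check directly whether that value can be a root of $f$, namely $0$, $\gamma$ or $1$, for the parameters \eqref{eq:param-EDO4v}. With $d_u=1$, $d_v=10$, $e=2$, $c=3$, the value is $22/(3b)$, which lies in $[0.29,0.73]$ for $b\in[10,25]$. This is never $0$, $\gamma=0.25$ or $1$, so the case does not occur and the equivalence holds without exception.
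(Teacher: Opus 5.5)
Your proposal is correct and is the direct computation the paper evidently intends; the paper itself states this proposition without proof. Cramer's rule on the linear $v$-system, the identification $\alpha_2=cd_v/((d_v+e)^2-d_v^2)$, and substituting $v_2-v_1=(\alpha_1-\alpha_2)(u_2-u_1)$ into the $u$-equations are exactly what produce \eqref{eq:sta2times2}--\eqref{eq:paramalpha}.

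Your treatment of the denominator $bu_1(\alpha_1-\alpha_2)-d_u$, which the paper ignores, is a real improvement. For general parameters the literal statement needs the caveat $d_u(2d_v+e)/(bc)\notin\{\gamma,1\}$. For \eqref{eq:param-EDO4v} your check is valid: the exact range is $[22/75,11/15]$, so your upper endpoint $0.73$ is only a harmless rounding.
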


\textbf{Dynamical description}
For parameters as in  \Cref{eq:param-EDO4v}, after numerical investigation, the dynamics can be summarized as follows: for $b$ close to $10$, there is only three stationary points $U^{1*}=(0,0,0,0)$, $U^{2*}=(\gamma,\frac{c}{e}\gamma,\gamma,\frac{c}{e}\gamma)$ and $U^{3*}=(1,\frac{c}{e},1,\frac{c}{e})$. $U^{1*}$ and $U^{3*}$ are locally asymptotically stable, $U^{2*}$ is unstable. As $b$ increases, two new unstable stationary points (with $u_1\neq u_2$), $U^{4*},U^{5*}$ appear. As $b$ increases more, four new stationary points appear: two unstable  $U^{6*},U^{7*}$ and two locally asymptotically stable  $U^{8*},U^{9*}$. We refer to \Cref{fig:EDO4v-stapoints} and \Cref{fig:EDO4v} for illustrations. Finally, we note that stationary solutions $U^{8*},U^{9*}$ provide illustrations for the effects of nonzero growth factors, diffusive factors and chemotactic factors at equilibrium. Indeed, at $U^{8*}$, we have a negative growth factor $f(u_1^{8*})<0$ and a negative diffusive factor $u_2^{8*}-u_1^{8*}<0$ (local maximum, negative diffusion),  compensated by a positive chemotactic factor $-bu_1(v_2-v_1)>0$; a positive growth factor $f(u_2^{8*})>0$ and a positive diffusive factor $u_1^{8*}-u_2^{8*}<0$ (local minimum, positive diffusion),  compensated by a negative chemotactic factor $-bu_2(v_1-v_2)<0$. We recover in this low dimensional system the principles that led to the formation of BMEC patterns in the KS PDE. It follows that \Cref{eq:2times2} represents a low-dimensional  prototype for non homogeneous concentrations states featuring growth, diffusive and chemotactic factors.

\begin{figure}
    \begin{center}

    \includegraphics[height=4.3cm,width=4.3cm]{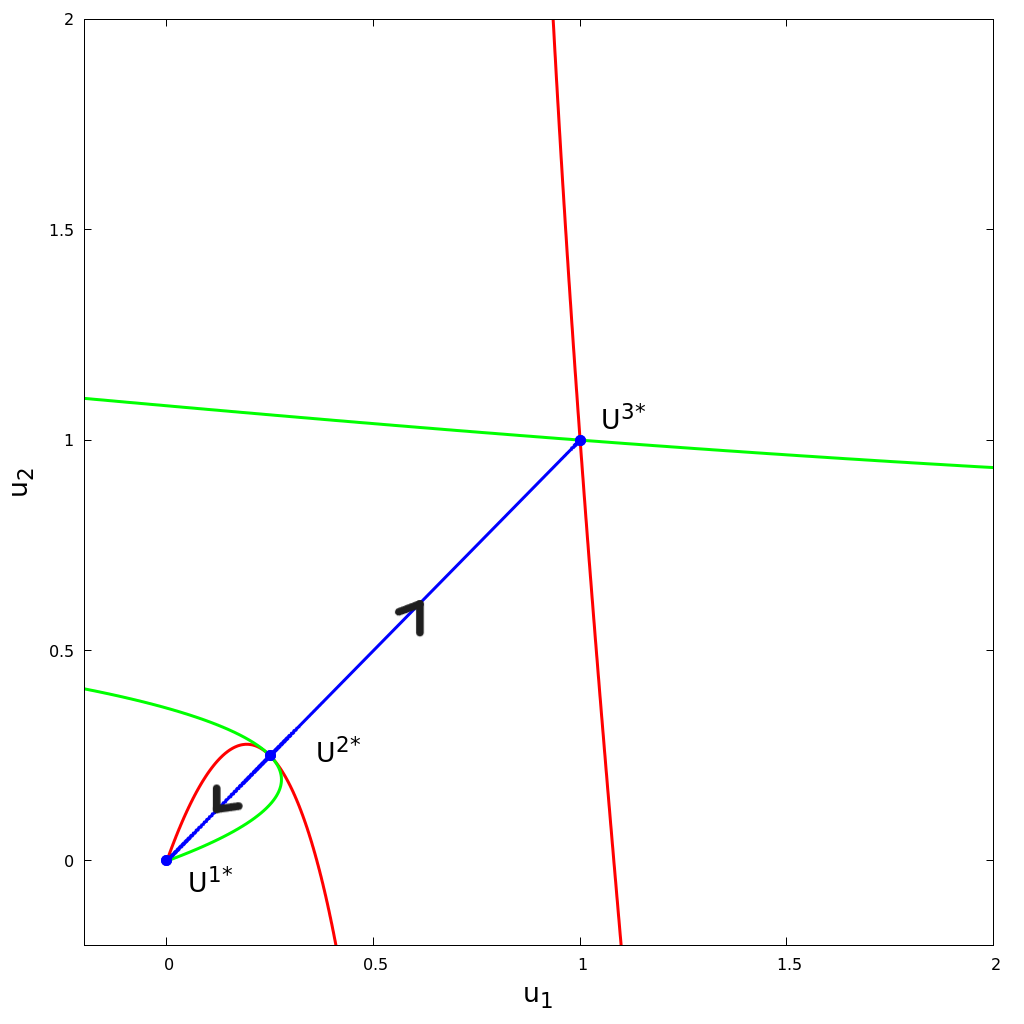}
      \includegraphics[height=4.3cm,width=4.3cm]{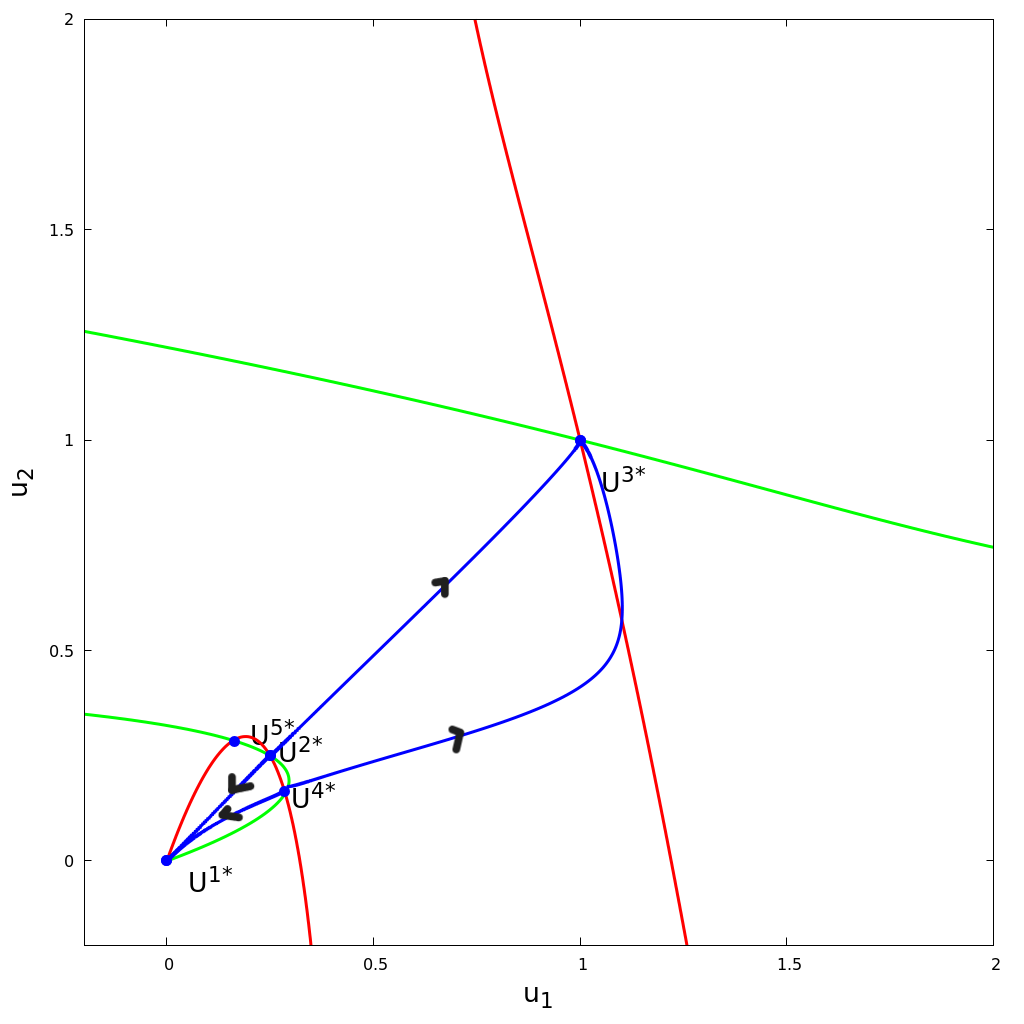}
       \includegraphics[height=4.3cm,width=4.3cm]{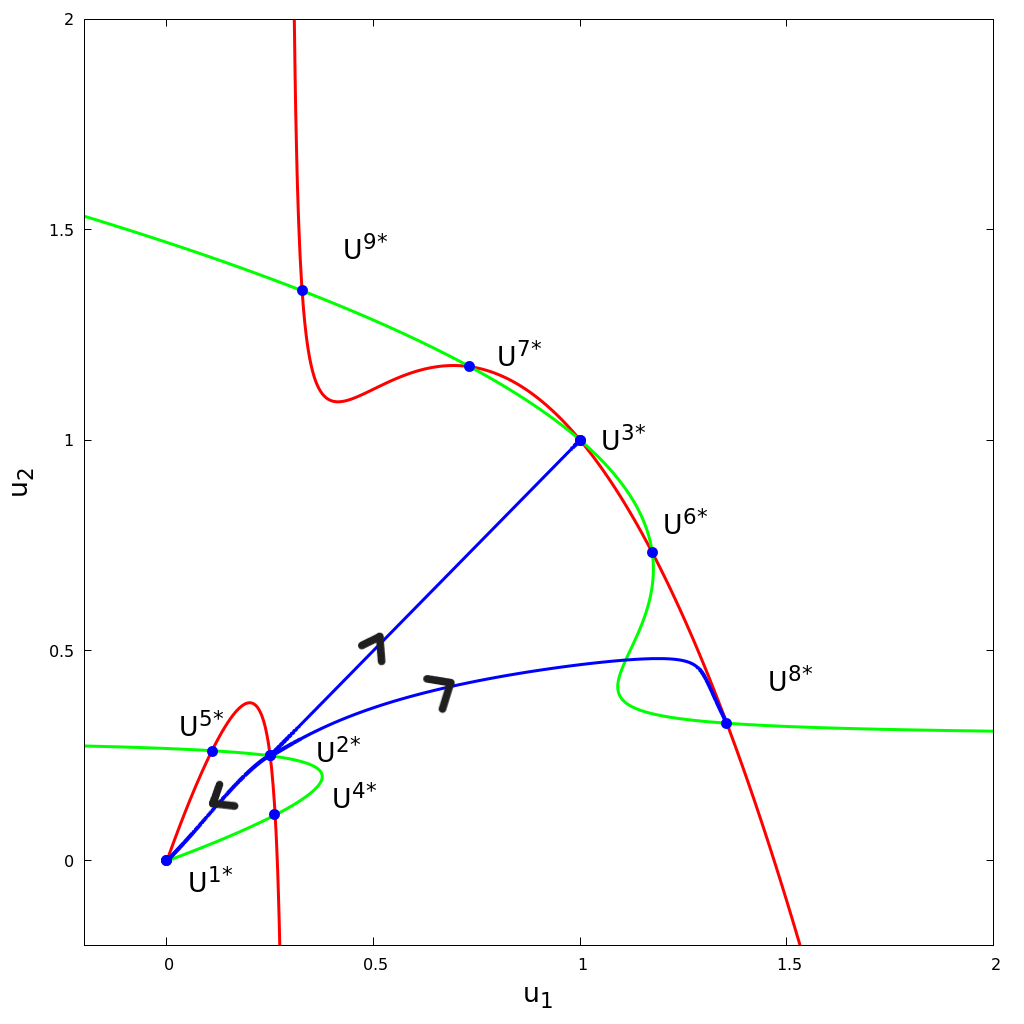}
 \end{center}
    \caption{Stationary solutions and some trajectories of \Cref{eq:2times2} projected in the $u_1,u_2$ phase plan. Left panel, $b=10$, middle panel, $b=15$, right panel, $b=25$. The red curves represent $u_2=\phi(u_1)$. Green curves represent $u_1=\phi(u_2)$. Blue dots are projections of stationary solutions at the intersection of red and green curves.  As the parameter $b$ increases from $10$ to $25$, new stationary points appear. On the left side, for $b=10$, we observe only the three stationary solutions where $u_1=u_2$ and $v_1=v_2$ which are known from the study of \Cref{eq:mKSODE}. At $b=15$, we observe two unstable stationary solutions $U^{4*}$ and $U^{5*}$. At $b=25$, we observe four more stationary solutions $U^{6*}$, $U^{7*}$, which are unstable and $U^{8*}$, $U^{9*}$, which are locally asymptotically stable. Blue curves represent the projections of some trajectories of \Cref{eq:2times2} projected in the $u_1,u_2$ phase plan. Note that there is a symmetry in the system.    }
    \label{fig:EDO4v-stapoints}
\end{figure}

\begin{figure}
    \begin{center}
    \includegraphics[height=4.3cm,width=4.3cm]{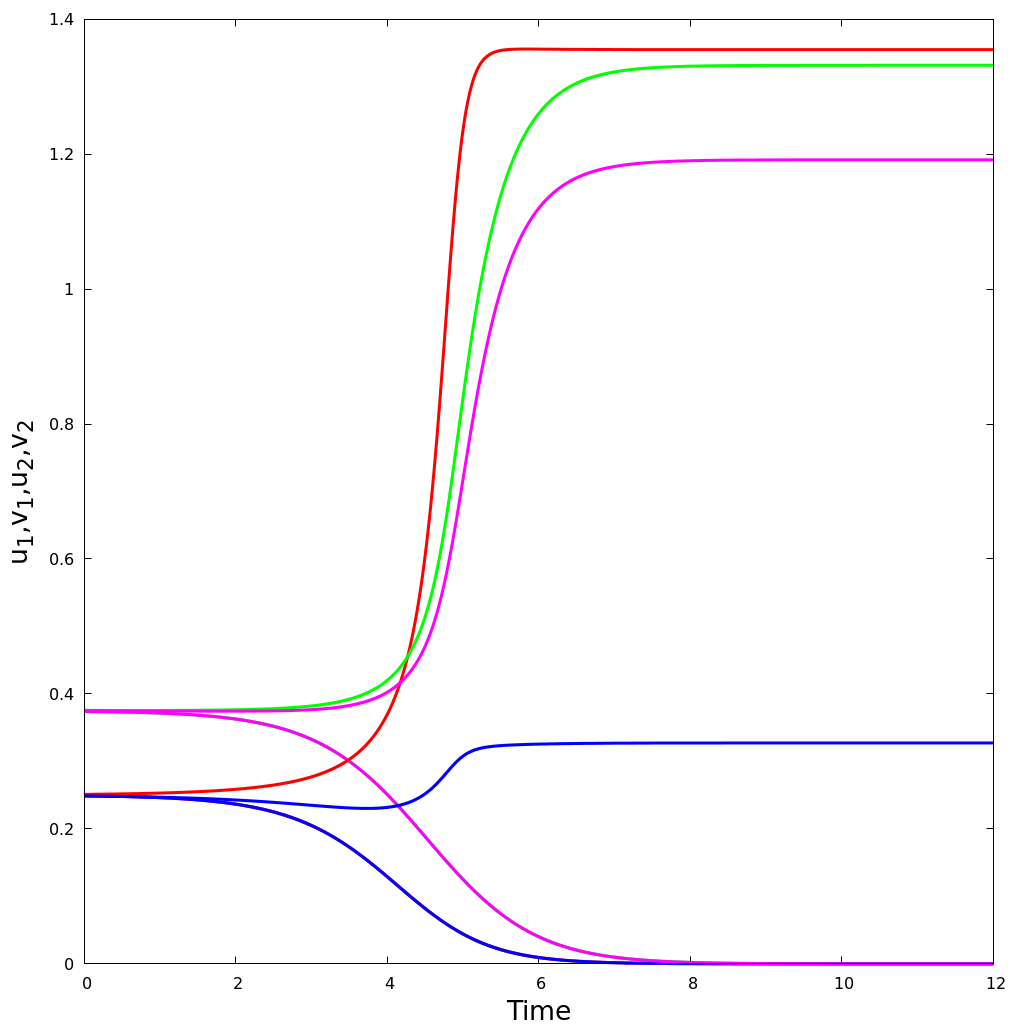}
    \includegraphics[height=6cm,width=6cm]{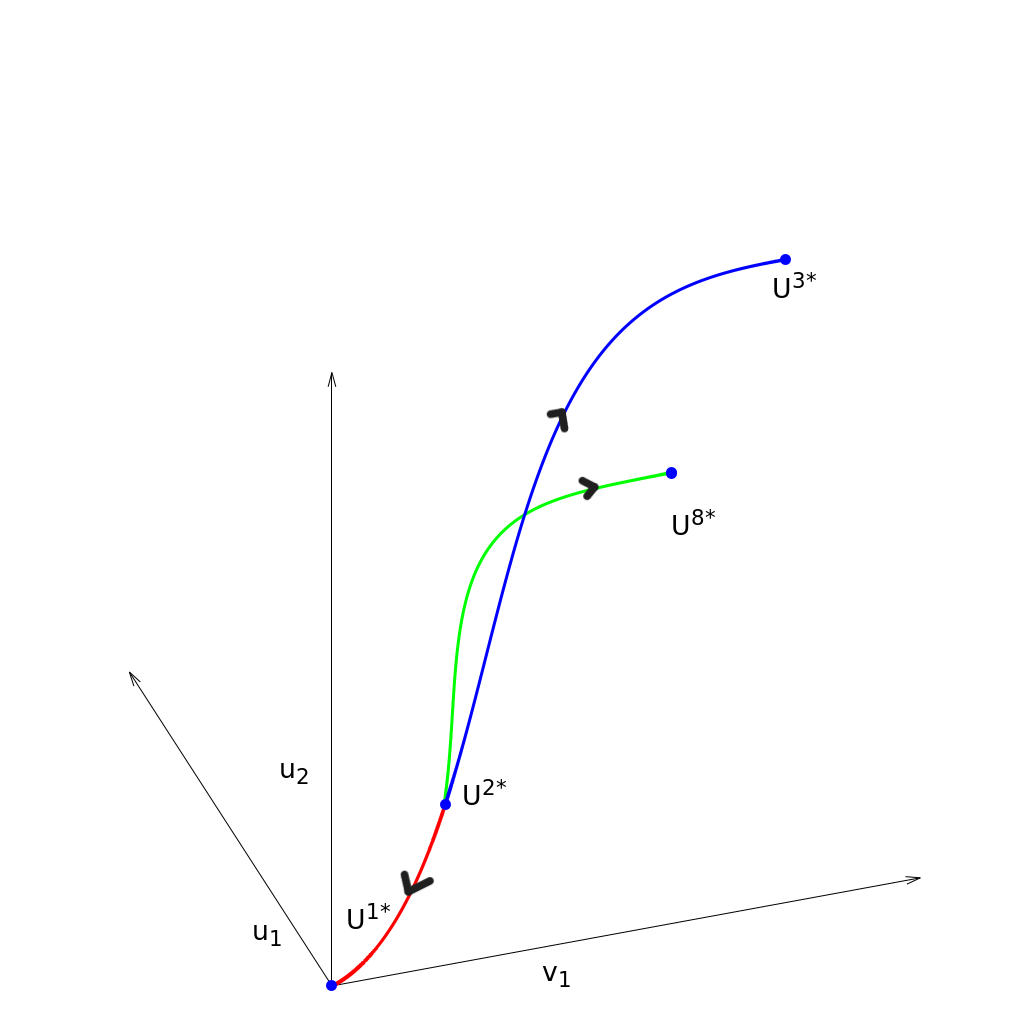}
    
 \end{center}
    \caption{Solutions of \Cref{eq:2times2} for $b=25$. The left panel illustrates $u_1$ (red), $v_1$ (green), $u_2$ (blue), $v_2$ (magenta) as functions of time, for two initial conditions close to the stationary point $U^{2*}=(\gamma,\frac{c}{e}\gamma,\gamma,\frac{c}{e}\gamma)$. We observe that one of the solutions converges toward $U^{8*}=(u_1^{8*},v_1^{8*},u_2^{8*},v_2^{8*})$ (with
$\gamma<u_2^{8*}<1<u_1^{8*}$) while the other one converges  toward $U^{1*}=(0,0,0,0)$. The right panel illustrates three heteroclinic orbits going from $U^{2*}$ towad $U^{1*}$ (red), $U^{3*}$ (blue) and $U^{8*}$ (green) as $t$ goes from $-\infty$ to $+\infty$.  }
    \label{fig:EDO4v}
\end{figure}

\section{Conclusion}
In this article, we have introduced a modified KS model that reproduces the patterns observed in \textit{ in vitro} BMEC growth experiments and provided mathematical insights into the mechanisms driving this pattern formation. Based on this model, we have also computed distributions of chemoattractant concentrations that are consistent with the time evolution of the experimental growth patterns. New data are currently being generated, and our dynamical-systems approach will be applied in combination with statistical and machine learning methods, with the goal of reducing the dimensionality required for characterization and extracting meaningful biological parameters. This research is part of a broader effort that includes developing biochemical mathematical models for angiogenesis, \textit{ in vitro} models incorporating BMECs, astrocytes, and neurons, as well as three-dimensional models of blood flow, ultimately aiming to create in silico platforms to support the development of new therapeutics for neurodegenerative diseases.

\section*{Acknowledgements}
This work was funded by the National Institute on Aging and the Department of 
Veterans Affairs IK2CX002180, DataPhilantropy, Chan Zuckerberg Initiative and
New Vision Research (to F.M.E.)
\appendix
\section{Implementation of the GMRES method for solving the elliptic equation for the field $v$}
In this study, we were led to the problem of computing a numerical solution to a linear elliptic PDE of the form:
\begin{equation*}
-\nabla u(t)\cdot \nabla v(t)-u\Delta v(t)=\frac{1}{b}\Big( u(t+1)-u(t)-f(u(t))-d_u\Delta u(t) \Big)     
\end{equation*}
with NBC on $\partial \Omega$. Here, $u$ is known (this is the data) and the unknown is $v$. Discretization of the operator $-\nabla u(t)\cdot \nabla v(t)-u\Delta v(t)$ using finite differences leads  to a sparse linear system:
\begin{equation}
\mathbf{A} \mathbf{X} = \mathbf{B},
\end{equation}
where for our choice of space step discretization $\mathbf{A} \in \mathbb{R}^{10^8}$. This is numerically challenging. The \emph{Generalized Minimal Residual (GMRES)} method~\cite{SaadSchultz1986} is an iterative method which does not require to explicit $A$, rather, it only requires to compute $AX$ which belongs to $\mathbb{R}^{10^4}$. This was indeed very efficient to solve our problem. GMRES is an iterative Krylov subspace method, which constructs an approximate solution $\mathbf{X}_k$ in the Krylov subspace
\begin{equation}
\mathcal{K}_k(\mathbf{A}, \mathbf{R}_0) = \text{span}\{\mathbf{R}_0, \mathbf{A} \mathbf{R}_0, \mathbf{A}^2 \mathbf{R}_0, \dots, \mathbf{A}^{k-1} \mathbf{R}_0\},
\end{equation}
where $\mathbf{R}_0 = \mathbf{B} - \mathbf{A} \mathbf{X}_0$ is the initial residual. At each iteration, GMRES computes
\begin{equation}
\mathbf{r}_k = \min_{\mathbf{X} \in \mathbf{X}_0 + \mathcal{K}_k} \| \mathbf{B} - \mathbf{A}\mathbf{X} \|_2.
\end{equation}
This constitutes the main loop of the algorithm that we implemented, which stops when $r_k$ reaches a small threshold value $\epsilon$.
We developed our own C++ code (supplementary material). For our problem, we set $\epsilon=0.1$.
At each step, the algorithm defines a new vector $V_k$, such that the  sequence $V_1,....,V_k$, is orthonormal. This goes as follows, initiate
\[V_1=\frac{r_0}{||r_0||.}\]
Assuming $V_1,...,V_{k-1}$ are known and orthonormal, then we seek
\[\tilde{V}_k=AV_{k-1}-\sum_{j=1}^{k-1}\beta_jV_j.\]
Orthogonality condition
\[(\tilde{V}_k,V_j)=0\]
leads to
\[\beta_j=(AV_{k-1},V_j), j\in\{1,...,k-1\}.\]
Then we normalize 
\[V_k=\frac{\tilde{V}_k}{||\tilde{V}_k||}.\]
This is used to compute $r_k$ as 
\[r_k = \min_{X_k=X_0+\sum_{i=1}^ky_iV_i} \| B - AX_k \|_2. \]
However, we remark that
\[B - AX_k=R_0-\sum_{i=1}^ky_iAV_i\]
and that
\[AV_i-\sum_{j=1}^i(AV_j,V_i)V_j=\tilde{V}_{i+1}=||\tilde{V}_{i+1}||V_{i+1}\]
which gives
\[r_k=\min_{y_1,y_2,...,y_k}\| R_0 - \sum_{i=1}^ky_i(\sum_{j=1}^{i+1}h_{ji}V_j) \|_2\]
where
\[h_{ji}=(AV_j,V_i) 1\leq j\leq i,\,\,h_{i+1,i}=||\tilde{V}_{i+1}||.\]
This rewrites in matricial form
\[r_k=\min_{y\in \R^k}||R_0-(VH)y||\]
where
\[V=(V_1,...,V_{k+1})\]
and
\[H=\begin{pmatrix}
h_{11} & h_{12} & \cdots & h_{1k} \\
h_{21} & h_{22} & \cdots & h_{2k} \\
0      & h_{32} & \cdots & h_{3k} \\
\vdots & \vdots & \ddots & \vdots \\
0      & 0      & \cdots & h_{k,k} \\
0      & 0      & \cdots & h_{k+1,k}
\end{pmatrix}.
\]
Since $V$ is orthonormal and $R_0=V_1r_0$, multiplying by $V$ gives
\[r_k=\min_{y\in \R^k}||r_0e_1-Hy||.\]
Then we further write
\[r_k=\min_{y\in \R^k}||r_0Qe_1-QHy||,\]
with
\[Q=G_kG_{k-1}...G_1\]

and

\[
G_{i}=
\begin{pmatrix}
1 &        &        &        &        &        \\
  & \ddots &        &        &        &        \\
  &        & 1      &        &        &        \\
  &        &        & \cos\theta & -\sin\theta &        \\
  &        &        & \sin\theta & \cos\theta  &        \\
  &        &        &        &        & 1      \\
  &        &        &        &        &        \ddots
\end{pmatrix}
\]
where $\theta$ is chosen so that the  $(i+1,i)$ entry of  $G_i...G_1H$ is zero. It follows that $QH$ is an upper triangular matrix with $k+1$ rows and $k$ columns, with $(k+1)-th$ row consisting entirely of zeros. From this,  we deduce that $r_k=(r_0Qe_1)_{k+1}$, and thus no explicit computation of $y$ is required until the while loop terminates. Finally, we summarize the algorithm below.

\begin{algorithm}[H]
\caption{Generalized Minimal Residual Method (GMRES) to solve the elliptic euqtion}
\label{alg:gmres}
\begin{enumerate}
\item Choose an initial guess $X_0 \in \mathbb{R}^n$ and a tolerance $\varepsilon > 0$.

\item Compute the initial residual
\[
R_0 = B - A X_0, 
\qquad r_0 = \|R_0\|_2.
\]

\item If $r_0<\varepsilon$, stop and set $X = X_0$.

\item Set
\[
V_1 = \frac{R_0}{\|R_0\|_2}
\]

\item Enter the main while loop, while  $r_k >\varepsilon$  do:
\begin{enumerate}
\item Compute
\[
h_{i,k},V_{k+1}, 1\leq i  \leq k+1
\]
as discussed above

\item Apply the previously computed matrix rotations
\[
G_1, G_2, \dots, G_{k-1}
\]
to the vector
\[
(h_{1,k}, h_{2,k}, \dots, h_{k+1,k})^T.
\]

\item Compute a new matrix rotation $G_k$ such that
\[
G_k
\begin{bmatrix}
h_{k,k} \\
h_{k+1,k}
\end{bmatrix}
=
\begin{bmatrix}
\sqrt{h^2_{k,k},
+h^2_{k+1,k}} \\
0
\end{bmatrix}.
\]

\item Apply $G_k$ and compute the residual norm $r_k$.

\item If $r_k \le \varepsilon$, stop.
\end{enumerate}

\item Solve the upper triangular system in $R^k$
\[
R y = r_0Qe_1,
\]

\item Compute the approximation of $v$,
\[
X_k = X_0 + V y.
\]

\end{enumerate}
\end{algorithm}

 \bibliographystyle{elsarticle-num} 
  \bibliography{samplebib}

@Inbook{Win2024,
author="Windon, Charles
and Elahi, Fanny M.",
editor="Ovbiagele, Bruce
and Kim, Anthony S.",
title="Vascular Cognitive Impairment",
bookTitle="Ischemic Stroke Therapeutics: A Comprehensive Guide",
year="2024",
publisher="Springer International Publishing",
address="Cham",
pages="399-424",
doi="10.1007/978-3-031-49963-0-30",
}

@article{Amb-2016,
  doi = {10.1007/s10441-016-9294-z},
  year  = {2016},
  month = {oct},
  publisher = {Springer Nature},
  volume = {64},
  number = {4},
  pages = {311--325},
  author = {B. Ambrosio and M. A. Aziz-Alaoui},
  title = {Basin of Attraction of Solutions with Pattern Formation in Slow{\textendash}Fast Reaction{\textendash}Diffusion Systems},
  journal = {Acta Biotheoretica}
}

@article{Amb-2012,
    author = {B. Ambrosio and M-A. Aziz-Alaoui},
    title = {Synchronization and control of coupled reaction-diffusion systems of the FitzHugh-Nagumo type},
    journal = {Computer and Mathematics with application} ,
    year = {2012},
volume ={64},
pages = {934-943}
}

@article{Blanchard2020APOE4,
  author  = {Blanchard, J. W. and Bula, M. and Davila-Velderrain, J. and Akay, L. A. and Zhu, L. and Frank, A. and Victor, M. B. and Bonner, J. M. and Mathys, H. and Lin, Y.-T. and Ko, T. and Bennett, D. A. and Kellis, M. and Tsai, L.-H.},
  title   = {Reconstruction of the human blood--brain barrier in vitro reveals a pathogenic mechanism of APOE4 in pericytes},
  journal = {Nature Medicine},
  year    = {2020},
  volume  = {26},
  number  = {6},
  pages   = {952--963},
  doi     = {10.1038/s41591-020-0886-4},
  url     = {https://doi.org/10.1038/s41591-020-0886-4}
}

@article{Cor2004,
  title = {Global Solutions of Some Chemotaxis and Angiogenesis Systems in High Space Dimensions},
  volume = {72},
  ISSN = {1424-9294},
  DOI = {10.1007/s00032-003-0026-x},
  number = {1},
  journal = {Milan Journal of Mathematics},
  publisher = {Springer Science and Business Media LLC},
  author = {Corrias,  L. and Perthame,  B. and Zaag,  H.},
  year = {2004},
  month = {oct},
  pages = {1–28},
}

@article{Dol2024,
title = {Optimal critical mass in the two dimensional Keller–Segel model in R2},
journal = {Comptes Rendus Mathematique},
volume = {339},
number = {9},
pages = {611-616},
year = {2004},
issn = {1631-073X},
doi = {https://doi.org/10.1016/j.crma.2004.08.011},
author = {Jean Dolbeault and Benoît Perthame},

}

@article{Elahi2023,
author = {Elahi, Fanny M},
title = {Proteomics analyses of CSF and Plasma for discovery of early vascular pathologies in neurodegenerative disease},
journal = {Alzheimer's \& Dementia},
volume = {19},
number = {S13},
pages = {e073997},
doi = {https://doi.org/10.1002/alz.073997},
year = {2023}
}

@article{Good2007,
title = {In vitro assays of angiogenesis for assessment of angiogenic and anti-angiogenic agents},
journal = {Microvascular Research},
volume = {74},
number = {2},
pages = {172-183},
year = {2007},
issn = {0026-2862},
doi = {https://doi.org/10.1016/j.mvr.2007.05.006},
author = {Anne M. Goodwin},
}

@article{Kel1971,
title = {Traveling bands of chemotactic bacteria: A theoretical analysis},
journal = {Journal of Theoretical Biology},
volume = {30},
number = {2},
pages = {235-248},
year = {1971},
issn = {0022-5193},
doi = {https://doi.org/10.1016/0022-5193(71)90051-8},
author = {Evelyn F. Keller and Lee A. Segel},
}

@article{Kut2012,
title = {Spatial pattern formation in a chemotaxis–diffusion–growth model},
journal = {Physica D: Nonlinear Phenomena},
volume = {241},
number = {19},
pages = {1629-1639},
year = {2012},
issn = {0167-2789},
doi = {https://doi.org/10.1016/j.physd.2012.06.009},
author = {Kousuke Kuto and Koichi Osaki and Tatsunari Sakurai and Tohru Tsujikawa}
}

@article{Lippmann2012BBB,
  author  = {Lippmann, Ethan S. and Azarin, Samira M. and Kay, Jennifer E. and Nessler, Randy A. and Wilson, Hannah K. and Al-Ahmad, Abraham and Palecek, Sean P. and Shusta, Eric V.},
  title   = {Derivation of blood-brain barrier endothelial cells from human pluripotent stem cells},
  journal = {Nature Biotechnology},
  year    = {2012},
  volume  = {30},
  number  = {8},
  pages   = {783--791},
  doi     = {10.1038/nbt.2247},
  pmid    = {22729031},
  url     = {https://doi.org/10.1038/nbt.2247}
}

@article{Mim1996,
title = {Aggregating pattern dynamics in a chemotaxis model including growth},
journal = {Physica A: Statistical Mechanics and its Applications},
volume = {230},
number = {3},
pages = {499-543},
year = {1996},
issn = {0378-4371},
doi = {https://doi.org/10.1016/0378-4371(96)00051-9},
author = {Masayasu Mimura and Tohru Tsujikawa}
}

@Book{ Murray,
	author = "J.D. Murray",
	publisher = "Springer",
	title = "Mathematical Biology",
	year = "2010"
}

@article{Ste2024,
title = {Global, regional, and national burden of disorders affecting the nervous system, 1990–2021: a systematic analysis for the Global Burden of Disease Study 2021},
journal = {The Lancet Neurology},
volume = {23},
number = {4},
pages = {344-81},
year = {2024},
issn = {0022-5193},
doi = {https://doi.org/10.1016/0022-5193(71)90051-8},
}

@article{Nich2022,
title = {Estimation of the global prevalence of dementia in 2019 and forecasted prevalence in 2050: an analysis for the Global Burden of Disease Study 2019},
journal = {The Lancet Public Health},
volume = {7},
number = {2},
pages = {e105-e125},
year = {2022},
issn = {2468-2667},
doi = {https://doi.org/10.1016/S2468-2667(21)00249-8},
author = {E. Nichols et al},
}

@article{Owe2024,
author = {Owens, Cameron D. and Pinto, Camila Bonin and Mukli, Peter and Gulej, Rafal and Velez, Faddi Saleh and Detwiler, Sam and Olay, Lauren and Hoffmeister, Jordan R. and Szarvas, Zsofia and Muranyi, Mihaly and Peterfi, Anna and Pinaffi-Langley, Ana Clara da C. and Adams, Cheryl and Sharps, Jason and Kaposzta, Zalan and Prodan, Calin I. and Kirkpatrick, Angelia C. and Tarantini, Stefano and Csiszar, Anna and Ungvari, Zoltan and Olson, Ann L. and Li, Guangpu and Balasubramanian, Priya and Galvan, Veronica and Bauer, Andrew and Smith, Zachary A. and Dasari, Tarun W. and Whitehead, Shawn and Medapti, Manoj R. and Elahi, Fanny M. and Thanou, Aikaterini and Yabluchanskiy, Andriy},
title = {Neurovascular coupling, functional connectivity, and cerebrovascular endothelial extracellular vesicles as biomarkers of mild cognitive impairment},
journal = {Alzheimer's \& Dementia},
volume = {20},
number = {8},
pages = {5590-5606},
doi = {https://doi.org/10.1002/alz.14072},
year = {2024}
}

@article{SaadSchultz1986,
  author = {Saad, Yousef and Schultz, Martin H.},
  title = {GMRES: A Generalized Minimal Residual Algorithm for Solving Nonsymmetric Linear Systems},
  journal = {SIAM Journal on Scientific and Statistical Computing},
  volume = {7},
  number = {3},
  pages = {856--869},
  year = {1986}
}

@article{Stanton2025miBrain,
  author  = {Stanton, Alice E. and Bubnys, Adele and Agbas, Emre and James, Benjamin and Park, Dong Shin and Jiang, Alan and Pinals, Rebecca L. and Liu, Liwang and Truong, Nhat and Loon, Anjanet and Staab, Colin and Cerit, Oyku and Wen, Hsin-Lan and Mankus, David and Bisher, Margaret E. and Lytton-Jean, Abigail K. R. and Kellis, Manolis and Blanchard, Joel W. and Langer, Robert and Tsai, Li-Huei},
  title   = {Engineered 3D immuno-glial-neurovascular human miBrain model},
  journal = {Proceedings of the National Academy of Sciences of the United States of America},
  year    = {2025},
  volume  = {122},
  number  = {42},
  pages   = {e2511596122},
  doi     = {10.1073/pnas.2511596122},
  url     = {https://doi.org/10.1073/pnas.2511596122}
}

@article{Tor2024,
  title = {Sexually dimorphic differences in angiogenesis markers are associated with brain aging trajectories in humans},
  volume = {16},
  ISSN = {1946-6242},
  url = {http://dx.doi.org/10.1126/scitranslmed.adk3118},
  DOI = {10.1126/scitranslmed.adk3118},
  number = {775},
  journal = {Science Translational Medicine},
  publisher = {American Association for the Advancement of Science (AAAS)},
  author = {Torres-Espin,  Abel and Radabaugh,  Hannah L. and Treiman,  Scott and Fitzsimons,  Stephen S. and Harvey,  Danielle and Chou,  Austin and Lindbergh,  Cutter A. and Casaletto,  Kaitlin B. and Goldberger,  Lauren and Staffaroni,  Adam M. and Maillard,  Pauline and Miller,  Bruce L. and DeCarli,  Charles and Hinman,  Jason D. and Ferguson,  Adam R. and Kramer,  Joel H. and Elahi,  Fanny M.},
  year = {2024},
  month = nov 
}

@Article{Turing,
title = { The Chemical Basis of Morphogenesis},
author = {A. M. Turing},
journal = {Phil. Trans. R. Soc. B},
year = {1952},
volume ={237},
pages = {37-72},
month={}
}

@article{Zhu2022,
    author = {Zhu, Winston M and Neuhaus, Ain and Beard, Daniel J and Sutherland, Brad A and DeLuca, Gabriele C},
    title = {Neurovascular coupling mechanisms in health and neurovascular uncoupling in Alzheimer’s disease},
    journal = {Brain},
    volume = {145},
    number = {7},
    pages = {2276-2292},
    year = {2022},
    month = {05},
    issn = {0006-8950},
    doi = {10.1093/brain/awac174},
}

@article{Zud2011,
    doi = {10.1371/journal.pone.0027385},
    author = {Zudaire, Enrique AND Gambardella, Laure AND Kurcz, Christopher AND Vermeren, Sonja},
    journal = {PLOS ONE},
    publisher = {Public Library of Science},
    title = {A Computational Tool for Quantitative Analysis of Vascular Networks},
    year = {2011},
    month = {11},
    volume = {6},
    pages = {1-12},
}

@book{BookKuehn,
  title = {Multiple Time Scale Dynamics},
  ISBN = {9783319123165},
  ISSN = {2196-968X},
  DOI = {10.1007/978-3-319-12316-5},
  journal = {Applied Mathematical Sciences},
  publisher = {Springer International Publishing},
  author = {Kuehn,  Christian},
  year = {2015}
}

@Book{ BookProtter-Weinberger,
	author = "M-H. Protter and H-F. Weinberger",
	publisher = "Springer",
	title = "Maximum Principles in Differential Equations",
	year = "1984"
}

@Book{ BookSmoller,
	author = "J. Smoller",
	publisher = "Springer-Verlag",
	title = "Shock Waves and Reaction Diffusion",
	year = "1994"
}

@misc{AngiotoolWS,
  title = {Angiotool},
  howpublished = {\url{https://ccrod.cancer.gov/confluence/display/ROB2/Home}},
  note = {Accessed: 2024-12-23},
}

%% else use the following coding to input the bibitems directly in the
%% TeX file.

%% Refer following link for more details about bibliography and citations.
%% https://en.wikibooks.org/wiki/LaTeX/Bibliography_Management

%\begin{thebibliography}{00}

%% For numbered reference style
%% \bibitem{label}
%% Text of bibliographic item

% \bibitem{lamport94}
%   Leslie Lamport,
%   \textit{\LaTeX: a document preparation system},
%   Addison Wesley, Massachusetts,
%   2nd edition,
%   1994.

%\end{thebibliography}
\end{document}